\newtheorem{theorem}{Theorem}
\newtheorem{definition}[theorem]{Definition}
\newtheorem{corollary}[theorem]{Corollary}
\newtheorem{lemma}[theorem]{Lemma}
\newtheorem{assumption}[theorem]{Assumption}
\newtheorem{remark}[theorem]{Remark}
\newtheorem{example}[theorem]{Example}
\newcommand{\norm}[1]{\left\Vert #1\right\Vert}
\newcommand{\abs}[1]{\left|#1\right|}
\def\field#1{\mathbb #1}%
\def\R{\field{R}}%
\def\Z{\field{Z}}%
\newcommand{\Rn}[1][n]{\R^{#1}}
\newcommand{\Rp}{\R_{\geq 0}}
\newcommand{\Rsp}{\R_{> 0}}
\newcommand{\Zp}{\Z_{\geq 0}}
\newcommand{\Zsp}{\field{N}}
\def\K{\mathcal{K}}%
\DeclareMathOperator{\id}{id}
\def\linf{\ell^{\infty}}
\def\KL{\mathcal{KL}}%
\def\Kinf{\mathcal{K}_\infty}%
\let\ol=\overline%
\let\ul=\underline%
\def\A{\mathcal{A}}%
\newcounter{CounterEquation} 
\title{Non-conservative discrete-time ISS small-gain conditions for closed sets}
\author{%
  Navid~Noroozi,
  Roman Geiselhart\thanks{Roman Geiselhart is with the %
  University of Ulm, %
  Institute of Measurement, Control and Microtechnology, %
  Albert-Einstein-Allee 41, %
  89081 Ulm, Germany, \texttt{roman.geiselhart@uni-ulm.de}}%
,
  Lars~Gr\"une\thanks{Lars Gr\"une is with the %
    University of Bayreuth, %
    Mathematical Institute, %
    Universit\"atsstra\ss e 30, %
    95440 Bayreuth, Germany, \texttt{lars.gruene@uni-bayreuth.de}}%
,
  Bj\"orn~S.~R\"uffer\thanks{Bj\"orn S. R\"uffer is with the %
    University of Newcastle (UON), %
    Faculty of Science, 
    University Drive, %
    Callaghan, NSW 2308, %
    Australia, \texttt{bjorn.ruffer@newcastle.edu.au}},
  Fabian~R.~Wirth\thanks{Navid Noroozi and Fabian R. Wirth are with the %
    University of Passau, %
    Faculty of Computer Science and Mathematics, %
    Innstra\ss e 33, %
    94032 Passau, %
    Germany, \texttt{navidnoroozi@gmail.com}, \texttt{fabian.lastname@uni-passau.de}. The work of N. Noroozi was supported by the Alexander von Humboldt Foundation.}
}
\begin{document}
\maketitle

\begin{abstract}
This paper presents a unification and a generalization of the small-gain theory subsuming a wide range of existing small-gain theorems.
In particular, we introduce small-gain conditions that are necessary and sufficient to ensure input-to-state stability (ISS) with respect to \emph{closed} sets.
Toward this end, we first develop a Lyapunov characterization of $\omega$ISS via finite-step $\omega$ISS Lyapunov functions.
Then, we provide the small-gain conditions to guarantee $\omega$ISS of a network of systems. 
Finally, applications of our results to partial input-to-state stability, ISS of time-varying systems, synchronization problems, incremental stability, and distributed observers are given.
\end{abstract}

\begin{IEEEkeywords}
large-scale discrete-time systems, Lyapunov methods, input-to-state stability
\end{IEEEkeywords}

\maketitle

\section{Introduction}

There have been many contributions to the stability analysis of large-scale systems over the last few decades.
However, it is still challenging to analyze the stability of interconnected systems with nonlinearities, and it is desirable to develop stability conditions which can be applied to a wide range of large-scale nonlinear systems.
Among various tools toward this end, Lyapunov-based small-gain theory has received considerable attention over the last few decades; e.g. \cite{Jiang.1996,Jiang.2001,Laila.2003,Dashkovskiy.2010,Liu.2012,Liu.2014,Sanfelice.2014} to name just a few.
These small-gain conditions essentially rely on the notion of input-to-state stability (ISS) \cite{Sontag.1995} and/or related notions such as input-to-output stability \cite{Sontag.1999} and input-output-to-state stability \cite{Jiang.2005}.
The main idea is to consider a large-scale system split into smaller subsystems and analyze each subsystem individually.
In that way, it is assumed that the other subsystems act as perturbations.
Then, if the influence of the subsystems on each other is small enough, stability of the original system can be concluded.
In practice, such a treatment is often conservative as each subsystem has to be individually ISS.
This raises the question: What if we let subsystems have a stabilizing effect on each other rather than simply considering them as a perturbation to each other?
In that way, one expects to see subsystems that are unstable when decoupled from the other subsystems.
This paper endeavors to provide a contribution toward development of small-gain conditions that ensure stability of \emph{discrete-time} interconnected systems whose subsystems do not individually have to meet the same stability property, as considered in~\cite{Gielen.2015,Geiselhart.2015,Geiselhart.2016,Noroozi.2014}.

We provide small-gain conditions, referred to as non-con\-ser\-vative small-gain conditions, which are necessary and sufficient to ensure ISS with respect to closed (\emph{not} necessarily compact) sets.
The non-conservative small-gain conditions rely on the existence of a Lyapunov-like function called a \emph{finite-step Lyapunov function}, which was originally introduced by Aeyels and Peuteman~\cite{Aeyels.1998} and termed as this in \cite{Geiselhart.2014c}.
Such a function is not required to satisfy a dissipation inequality at each time step.
Instead, it only needs to satisfy a dissipation-like inequality after some finite (but constant) time.
Gielen and Lazar~\cite{Gielen.2015} studied global exponential stability (GES) for a feedback interconnection of discrete-time systems and developed small-gain conditions which are necessary and sufficient to assure GES for the interconnected system.
The small-gain conditions in \cite{Gielen.2015} were extended to the case of global asymptotic stability (GAS) by Geiselhart \emph{et al.}~\cite{Geiselhart.2015}.
More recently, Geiselhart and Wirth~\cite{Geiselhart.2016} developed non-conservative small-gain conditions, but the necessity of the conditions is \emph{only} guaranteed for networks that are exponentially ISS (i.e. solutions of the unperturbed system are decaying exponentially).
As a special case of our current results, we present non-conservative small-gain conditions for ISS (not necessarily exponential ISS) of networks.

The small-gain conditions developed in this paper unify and generalize a wide range of existing small-gain theorems.
Toward the unification and the generalization, we first provide Lyapunov characterizations of the so-called $\omega$ISS property (i.e., ISS with respect to closed sets).
The introduction of Lyapunov characterizations of $\omega$ISS is nontrivial, in general.
Most works in the literature have focused on ISS with respect to compact sets, e.g., \cite{Sontag.1996,Jiang.2001}.
Recently, progress toward Lyapunov characterizations of $\omega$ISS has been reported in \cite{Kellett.2012,Tran.2015b,Tran.2015a}.
In particular, it is shown in~\cite{Tran.2015b} that the existence of a dissipative-form $\omega$ISS Lyapunov function implies $\omega$ISS.
To show the converse, the authors, however, impose a \emph{compactness condition}, i.e. the converse ISS Lyapunov function is only obtained with respect to compact sets.
The authors also establish that the implication-form is equivalent to the dissipative-form under this compactness condition.
Motivated by~\cite{Grune.2014}, it is shown in~\cite{Tran.2015a} that a \emph{strong} variant of the implication-from ISS Lyapunov function implies $\omega$ISS.
However, as in \cite{Tran.2015b}, the compactness condition is imposed to obtain the converse.
Unlike \cite{Tran.2015b} and~\cite{Tran.2015a}, we establish Lyapunov function characterizations of ISS with respect to closed (not necessarily compact) sets.
In particular, our first result shows an equivalence between seemingly different Lyapunov characterizations of ISS.
More precisely, we introduce a so-called max-form $\omega$ISS Lyapunov function that is shown to be equivalent to the two other forms of Lyapunov functions: the implication-form and the dissipative-form $\omega$ISS Lyapunov functions.
The max-form $\omega$ISS Lyapunov function is particularly relevant for large-scale systems, which are our motivation for the introduction of this form of Lyapunov functions.
We also show that ISS with respect to a closed set is equivalent to the existence of an ISS Lyapunov function.
It should be noted that our results on the notion of $\omega$ISS are novel not only for finite-step Lyapunov functions but also for classic Lyapunov functions.
Then, we use the results to develop non-conservative small-gain conditions.
In summary, the contribution in this paper is as follows: three different Lyapunov characterizations of $\omega$ISS are given (see Theorem~\ref{thm:equivalence-of-Lyapunov-functions} below); necessary and sufficient conditions for $\omega$ISS of a large-scale system based on estimates of subsystems solutions are provided (see Theorems~\ref{thm:small-gain} and~\ref{thm:reverse-small-gain} below); these results cover the existing small-gain conditions, including those in \cite{Jiang.2001,Laila.2003,Liu.2012,Gielen.2015,Geiselhart.2015,Geiselhart.2016,Noroozi.2014}, as a special case.
These would not have been done without establishing the equivalence between the max-form $\omega$ISS Lyapunov function and the two other forms of Lyapunov functions. Further discussion about the challenges overcome in the paper is given later (see the paragraph immediately after Theorem~\ref{thm:reverse-small-gain}).
Finally, we illustrate the flexibility of our approach by reformulating several engineering and scientific problems including partial ISS, ISS for time-varying systems, incremental stability, and distributed observers as $\omega$ISS of networks.

This paper is organized as follows:
First relevant notation is recalled in Section~\ref{sec:notation}.
Then Lyapunov characterizations of $\omega$ISS for discrete-time systems via finite-step $\omega$ISS Lyapunov functions are introduced in Section~\ref{sec:iss-lyapunov-functions}.
Small-gain conditions for $\omega$ISS of large-scale discrete-time systems are provided in Section~\ref{sec:iss-networks-of-systems}.
Applications of our results are given in Section~\ref{sec:applications}.
Section~\ref{sec:conclusions} concludes the paper.

\section{Notation} \label{sec:notation}

In this paper, $\Rp (\Rsp)$ and $\Zp (\Zsp)$ denote the nonnegative
(positive) real numbers and the nonnegative (positive) integers,
respectively.
The $i$th component of $v \in \Rn$ is denoted by
$v_i$.
For any $v, w \in \Rn$, we write $v \gg w$ ($v \geq w$) if and only if $v_i > w_i$ ($v_i \geq w_i$) for each $i \in \{1,\dots,n\}$.
Let $\{e_i\}_{i=1}^{n}$ be the standard basis of $\Rn$.
For any $x \in \Rn$, $x^\top$ denotes its transpose.
We write $(x,y)$ to represent $[x^\top,y^\top]^\top$ for $x \in \Rn,y \in \R^p$.

A norm $\mu$ on $\Rn$ is called \emph{monotonic} if $0\leq x\leq y$
implies $\mu(x)\leq\mu(y)$.
In particular, $\abs{x}$ and $\abs{x}_\infty$, respectively, denote the Euclidean norm and the maximum norm for $x \in \Rn$.
Given a nonempty set $\mathcal{A}\subset\Rn$ and any point $x\in\Rn$, we denote
$\abs{x}_\A \coloneqq \inf\limits_{y \in \A}\abs{x-y}$.
Let a nonempty compact set $\A \subset \Rn$ be given.
A function $\omega \colon \Rn \to \Rp$ is said to be a \emph{proper indicator} for $\A$ if $\omega$ is continuous, $\omega(x)=0$ if and only if $x\in\mathcal{A}$, and $\omega(x) \to \infty$ when $\abs{x} \to \infty$.

Given a function $\varphi \colon \Zp \to \Rn[m]$, its sup-norm (possibly infinite) is denoted by $\norm{\varphi} = \sup \{ \abs{\varphi(k)} \colon k \in \Zp \} \leq \infty$.
The set of all functions $\Zp \to \Rn[m]$ with finite sup-norm is denoted by $\linf$.

A function $\rho \colon \Rp \to \Rp$ is positive definite if it is continuous, zero at zero and positive otherwise.
A positive definite function $\alpha$ is of class-$\K$ ($\alpha \in \K$) if it is strictly increasing. 
It is of class-$\Kinf$ ($\alpha \in \Kinf$) if $\alpha \in \K$ and also $\alpha(s) \to \infty$ if $s \to \infty$.
A continuous function $\beta \colon \Rp \times \Rp \to \Rp$ is of class-$\mathcal{KL}$ ($\beta \in \mathcal{KL}$), if for each $s \geq 0$, $\beta(\cdot,s)\in\mathcal{K}$, and for each $r \geq 0$, $\beta (r,\cdot)$ is non-increasing with $\beta (r,s)\to 0$ as $s \to \infty$.
The interested reader is referred to \cite{Kellett.2014} for more details about comparison functions.

The identity function is denoted by $\id$. Composition of functions is
denoted by the symbol $\circ$ and repeated composition of, e.g., a
function $\gamma$ by $\gamma^{i}$.
For positive definite functions $\alpha,\gamma$ we write $\alpha<\gamma$ if $\alpha(s)<\gamma(s)$ for all $s>0$.

\section{$\omega$ISS Lyapunov Functions}\label{sec:iss-lyapunov-functions}

This section provides a characterization of $\omega$ISS for discrete-time systems via finite-step $\omega$ISS Lyapunov functions.
Consider the discrete-time system
\begin{equation}%
  \label{eq:1}
  \Sigma\colon \qquad x (k+1) = g (x(k),u(k))
\end{equation}
with state $x (k) \in \Rn$ and inputs or controls $u \colon \Zp \to \Rn[m]$, $u \in \linf{}$. 
We assume that $g \colon \Rn \times \Rn[m] \to \Rn$ is continuous. 
For any initial value $\xi \in \Rn$ and any input $u \in \linf{}$, $x(\cdot,\xi,u)$ denotes the corresponding solution to \eqref{eq:1}.

\begin{definition}
A continuous and positive semi-definite function $\omega \colon \Rn \to \Rp$ is called a \emph{measurement function}.
\end{definition}

\begin{definition}\label{def:gKb}
Given a measurement function $\omega$, we call the function $g$ in~\eqref{eq:1} \emph{globally $\K$-bounded} with respect to $\omega$ if there exist $\kappa_1,\kappa_2 \in \Kinf$ such that
\begin{align}    \label{eq:2}
    \omega(g(\xi,\mu)) \leq \kappa_1 (\omega(\xi)) + \kappa_2 (\abs{\mu}) 
  \end{align}
  for all $\xi \in \Rn$ and all $\mu \in \Rn[m]$.
\end{definition}
Note that this definition extends the global $\K$-boundedness definition from~\cite{Geiselhart.2016} by including a measurement function. 
The importance of the global $\Kinf$-boundedness will become obvious in the subsequent results.
Further discussion is provided in the last paragraph of this section.

We borrow the notion of input-to-state stability with respect to a
measurement function $\omega$ from \cite{Tran.2015b} and \cite{Tran.2015a}.

\begin{definition} 
The discrete-time system \eqref{eq:1} is \emph{input-to-state stable} with respect to a measurement function $\omega$ ($\omega$ISS) if there exist $\beta \in \mathcal{KL}$ and $\gamma \in \K$ such that for all $\xi \in \Rn$, all $u \in \linf{}$ and all $k \in \Zp$ we have
\begin{equation} \label{eq:ISSestimate}
\omega\big(x(k,\xi,u)\big) \leq \max \Big\{ \beta\big(\omega(\xi),k\big), \gamma(\norm{u}) \Big\}.
\end{equation}
\end{definition}

\begin{remark}%
If $\omega$ is given by a norm, i.e. $\omega(\cdot) = \abs{\cdot}$, then we recover the notion of input-to-state stability (ISS)~\cite{Sontag.1989,Jiang.2001}.
In that case, continuity of $g$ in \eqref{eq:1} implies global $\K$-boundedness, as shown in \cite{Geiselhart.2017}.
Also, $\omega$ISS subsumes the notion of state independent-input-to-output stability (SI-IOS) \cite{Sontag.1999,Jiang.2005} by taking $\omega (\cdot) = \abs{h(\cdot)}$ where $h \colon \Rn \to \Rn[m]$ is the continuous output mapping of system \eqref{eq:1}.
Moreover, set-stability versions are also covered by allowing the measurement function $\omega$ to be defined as the distance to a (closed) set.
Similarly, (robust) stability of a prescribed motion can be considered by suitable choice of $\omega$ (see \cite{El-Hawwary2013,Skjetne2002} for instance). As shown in \cite{Kellett.2012}, it also includes a weak form of incremental ISS \cite{Angeli.2002} where the input-dependent bound is the essential supremum of inputs of the systems rather than the difference between the two inputs.
\end{remark}

Now we introduce several characterizations of $\omega$ISS Lyapunov
functions.

\begin{definition} \label{D:lyap-func}
  Let $\omega$ be a measurement function and $M\in\Zsp$. Let
  $V \colon \Rn \to \Rp$ be continuous and let there exist 
  $\ul\alpha,\ol\alpha\in\Kinf$ so that for all $\xi\in\Rn$,
  \begin{gather}
    \ul\alpha (\omega(\xi)) \leq V(\xi) \leq \ol\alpha (\omega(\xi))  \label{eq:3}.
  \end{gather}
  The function $V$ is called
  \begin{itemize}
  \item {a} \emph{max-form finite-step $\omega$ISS Lyapunov function}
    for~\eqref{eq:1} if there exist $\alpha_{\text{max}} \in \Kinf$ with
    $\alpha_{\text{max}} < \id$ and $\gamma_{\text{max}} \in \K$ such that for all $\xi \in \Rn$
    and all $u \in \linf{}$,
    \begin{gather}
      V(x(M,\xi,u)) \leq \max \{ \alpha_{\text{max}}(V(\xi)), \gamma_{\text{max}} (\norm{u}) \}.  \label{eq:4}
    \end{gather}
    
  \item {an} \emph{implication-form finite-step $\omega$ISS Lyapunov
      function} for~\eqref{eq:1} if there exists a positive definite
    function $\alpha_{\text{imp}}$ and a function $\gamma_{\text{imp}} \in \K$ such that for all
    $\xi \in \Rn$ and all $u \in \linf$,
    \begin{gather}
        \begin{aligned}
          V(\xi) \geq \gamma_{\text{imp}}(\norm{u}) \qquad \implies\qquad \\
          V(x(M,\xi,u)) - V(\xi) \leq -\alpha_{\text{imp}}(V(\xi)) .
        \end{aligned}
      \label{eq:5}
    \end{gather}
  \item {a} \emph{dissipative-form finite-step $\omega$ISS Lyapunov
      function} for~\eqref{eq:1} if there exist $\alpha_{\text{diss}} \in \Kinf$
    with $\alpha_{\text{diss}} < \id$ and $\gamma_{\text{diss}} \in \K$ such that for all
    $\xi \in \Rn$ and all $u \in \linf$,
    \begin{gather}
      V(x(M,\xi,u)) - V(\xi) \leq - \alpha_{\text{diss}}(V(\xi)) + \gamma_{\text{diss}}(\norm{u}) . \label{eq:6}
    \end{gather}
  \end{itemize}
  For the case $M=1$ we drop the term ``finite-step'' and instead
  speak of a ``classic'' $\omega$ISS Lyapunov function of the
  respective type. 
\end{definition}

Whenever we say ``$\omega$ISS Lyapunov function'' without further
qualification, we refer to any of the three types defined above, or
the type is determined by the context where this reference appears.
This inaccuracy is motivated by the fact that all these three forms are equivalent provided that a global $\K$-boundedness property holds, as Theorem~\ref{thm:equivalence-of-Lyapunov-functions} below shows.
Note that, because of the $\K$-boundedness property we do not have to assume that $\omega$ is a proper indicator function, as done in~\cite{Jiang.2001,Tran.2015b,Tran.2015a}.

\begin{remark} \label{R:01}
  \begin{itemize}
  \item
    We point out that an equivalent form of \eqref{eq:5}, which is given in \cite{Jiang.2001,Tran.2015b}, is the following
    \begin{align*}
      \omega(\xi) \geq \tilde\gamma_{\text{imp}}(\norm{u}) \qquad \implies\qquad \\
      V(x(M,\xi,u)) - V(\xi) \leq - \alpha_{\text{imp}}(V(\xi)),
    \end{align*}
    where $\tilde\gamma_{\text{imp}} (\cdot):= \ul\alpha^{-1} \circ \gamma_{\text{imp}} (\cdot)$.
    Here we place emphasis on \eqref{eq:5} as it simplifies exposition of proofs.
  \item
    Via a rescaling of the Lyapunov function $V$, i.e., $W\coloneqq \rho(V)$ for some suitable $\Kinf$ function $\rho$, there is no loss of generality in assuming that the functions $\alpha_{\text{imp}}$ and $\gamma_{\text{imp}}$ in \eqref{eq:5} are of class $\Kinf$, see \cite[Remark 3.3]{Jiang.2005} and \cite[Remark 3.3]{Jiang.2001} for details.
  \end{itemize}
\end{remark}

\begin{theorem}
\label{thm:equivalence-of-Lyapunov-functions} Let $g\colon\Rn \times \Rn[m] \to \Rn$ be continuous. 
  Then the following properties are equivalent.
  \begin{enumerate}
  \item System \eqref{eq:1} admits a max-form finite-step
    $\omega$ISS Lyapunov function. 
  \item System \eqref{eq:1} admits an implication-form finite-step $\omega$ISS Lyapunov function and $g$ is globally $\K$-bounded.
  \item System \eqref{eq:1} admits a dissipative-form finite-step $\omega$ISS Lyapunov function.
  \item System~\eqref{eq:1} is $\omega$ISS.
\end{enumerate}
In particular, the constant $M \in \Zsp$, which by Definition~\ref{D:lyap-func} appears in the $\omega$ISS Lyapunov functions, can be chosen arbitrarily in the items $(i)$, $(ii)$ and $(iii)$.
\end{theorem}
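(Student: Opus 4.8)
The plan is to establish the four statements as a cycle of implications, with the global $\K$-boundedness \eqref{eq:2} of $g$ playing the pivotal structural role. Two observations organize the argument. First, $(iv)$ already forces $\K$-boundedness: evaluating \eqref{eq:ISSestimate} at $k=1$ along the constant input $u\equiv\mu$ gives $\omega(g(\xi,\mu))\le\beta(\omega(\xi),1)+\gamma(\abs{\mu})$, and majorizing the class-$\K$ maps $\beta(\cdot,1),\gamma$ by class-$\Kinf$ maps yields \eqref{eq:2}. Second, each of \eqref{eq:4}, \eqref{eq:5}, \eqref{eq:6} constrains the state only after exactly $M$ steps and says nothing about the intermediate instants $1,\dots,M-1$; it is precisely $\K$-boundedness that controls those instants, since iterating \eqref{eq:2} gives $\omega(x(j,\xi,u))\le\hat\kappa_1(\omega(\xi))+\hat\kappa_2(\norm{u})$ for $0\le j\le M$ with $\hat\kappa_1,\hat\kappa_2\in\Kinf$. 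This is why $\K$-boundedness is carried along throughout (explicitly in $(ii)$, and available in the remaining items) and is exactly what bridges the finite-step Lyapunov inequalities to the step-by-step estimate \eqref{eq:ISSestimate}.

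The interconversion of the three Lyapunov forms is comparison-function calculus, the one recurring nuisance being that $\id-\alpha$ need not be of class $\Kinf$ when $\alpha\in\Kinf$ with $\alpha<\id$. I would neutralize this once and for all with a reshaping lemma: for such $\alpha$ there is $\rho\in\Kinf$ with $\rho\circ\alpha\le\tfrac12\rho$, obtained by prescribing $\rho$ on a fundamental domain $[\alpha(s_*),s_*]$ and propagating through $\rho(\alpha(s))=\tfrac12\rho(s)$; replacing $V$ by $\rho(V)$ preserves the sandwich \eqref{eq:3}. With this in hand, $(i)\Rightarrow(iii)$ follows from $\max\{a,b\}\le a+b$ after reshaping $V$ so that the gain becomes $\tfrac12\id$, giving \eqref{eq:6} with $\alpha_{\text{diss}}=\tfrac12\id$. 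For $(iii)\Rightarrow(ii)$, the implication form \eqref{eq:5} is obtained by the usual restriction argument, taking $\gamma_{\text{imp}}=\alpha_{\text{diss}}^{-1}\circ(2\gamma_{\text{diss}})$ so that on $\{V(\xi)\ge\gamma_{\text{imp}}(\norm{u})\}$ one absorbs $\gamma_{\text{diss}}(\norm{u})$ into half of $\alpha_{\text{diss}}(V(\xi))$ (the harmless upgrade of $\alpha_{\text{imp}},\gamma_{\text{imp}}$ to $\Kinf$ is Remark~\ref{R:01}), while the $\K$-boundedness clause is supplied by the standing property. Finally $(ii)\Rightarrow(i)$ recombines the two regimes: on $\{V(\xi)\ge\gamma_{\text{imp}}(\norm{u})\}$ the decrease \eqref{eq:5} yields a max-type bound with gain $\id-\alpha_{\text{imp}}$ (reshaped to lie in $\Kinf$ below $\id$), whereas on the complementary input-dominated regime $\K$-boundedness produces $V(x(M,\xi,u))\le\ol\alpha\big(\hat\kappa_1(\ul\alpha^{-1}(\gamma_{\text{imp}}(\norm{u})))+\hat\kappa_2(\norm{u})\big)=:\gamma_{\text{max}}(\norm{u})$, the growth control that \eqref{eq:5} cannot give by itself.

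It remains to connect this cluster to $\omega$ISS, for which I would prove $(ii)\Rightarrow(iv)$ and $(iv)\Rightarrow(iii)$; routing the forward passage through $(ii)$ guarantees that $\K$-boundedness is present. For $(ii)\Rightarrow(iv)$, iterating \eqref{eq:5}/\eqref{eq:6} over the blocks $\{0,M,2M,\dots\}$ produces a $\KL$-type decay of $V(x(jM,\xi,u))$ in $j$ down to a $\gamma(\norm{u})$ floor; the sandwich \eqref{eq:3} turns this into a bound on $\omega$ at the block times, $\K$-boundedness fills in the finitely many intermediate instants, and a routine majorization assembles a single $\beta\in\KL$ and $\gamma\in\K$ as in \eqref{eq:ISSestimate}.

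The genuine obstacle is the converse $(iv)\Rightarrow(iii)$, namely building a Lyapunov function for a \emph{closed, possibly non-compact} measurement set, where the proper-indicator/compactness machinery used in \cite{Jiang.2001,Tran.2015b,Tran.2015a} is unavailable. My plan is to first invoke Sontag's $\KL$-lemma to replace $\beta$ by a separable majorant $\beta(r,k)\le\alpha_1(\alpha_2(r)e^{-k})$, and then define $V$ by a Sontag--Wang-type supremum along trajectories, of the form $V(\xi)=\sup_{k\ge0,\,u\in\linf}\max\{0,\ \rho(\omega(x(k,\xi,u)))-\sigma(\norm{u})\}$ for suitably chosen $\rho,\sigma\in\Kinf$. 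Three things must then be checked: finiteness together with the upper bound in \eqref{eq:3} --- here global $\K$-boundedness, rather than compactness, is what prevents the intermediate-step contributions from driving the supremum to infinity; the $M$-step dissipation \eqref{eq:6}, which follows from the dynamic-programming structure of the supremum upon discarding the first $M$ terms; and the continuity of $V$, which I expect to be the most delicate point, since a supremum of continuous functions is a priori only lower semicontinuous and must be tamed using the reshaping from the $\KL$-lemma together with continuity of $g$ and of $\omega$. Because the whole construction is parametrized by the horizon, carrying it out for an arbitrarily prescribed $M\in\Zsp$ simultaneously delivers the closing assertion that $M$ is free in $(i)$--$(iii)$.
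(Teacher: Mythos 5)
Your architecture is genuinely different from the paper's: the paper proves the single cycle $(i)\Rightarrow(iii)\Rightarrow(iv)\Rightarrow(ii)\Rightarrow(i)$, outsourcing $(iii)\Rightarrow(iv)$ to \cite[Theorem~4.1]{Geiselhart.2016} and $(iv)\Rightarrow(ii)$ to \cite[Theorem~6]{Tran.2015b}, whereas you prove the cycle $(i)\Rightarrow(iii)\Rightarrow(ii)\Rightarrow(i)$ among the Lyapunov forms and then attach $\omega$ISS via $(ii)\Rightarrow(iv)$ and $(iv)\Rightarrow(iii)$. Your steps $(i)\Rightarrow(iii)$ (rescaling \`a la \cite[Lemma~2.8]{Jiang.2002}) and $(ii)\Rightarrow(i)$ (two regimes plus the iterated $\K$-bound, i.e.\ Lemma~\ref{lem:evolutionKbound}) essentially coincide with the paper's. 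But two of your links have genuine gaps.

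First, $(iii)\Rightarrow(ii)$: you assert that the global $\K$-boundedness required in item $(ii)$ ``is supplied by the standing property.'' There is no such standing property. The theorem assumes only continuity of $g$, and continuity yields global $\K$-boundedness only when $\omega$ is a norm (or a proper indicator), not for a general measurement function. Nor can item $(iii)$ itself supply it when $M\geq 2$: take $n=2$, $g(\xi,\mu)=(\xi_2,0)$, $\omega(\xi)=\abs{\xi_1}$, $V=\omega$, $M=2$; then $x(2,\xi,u)=0$ for all $\xi,u$, so \eqref{eq:3} and \eqref{eq:6} hold trivially, yet $\omega(g(\xi,\mu))=\abs{\xi_2}$ admits no bound of the form \eqref{eq:2}. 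In other words, $\K$-boundedness can only enter through the $\omega$ISS estimate \eqref{eq:ISSestimate} itself (evaluated at $k=1$ --- the very computation you do in your opening paragraph), which is exactly why the paper derives $(ii)$ from $(iv)$ rather than from $(iii)$. By reordering the cycle so that $(ii)$ is deduced from $(iii)$, you have placed the burden of producing $\K$-boundedness at the one point where it is unobtainable. (The same example also stresses the paper's own ``mutatis mutandis'' step $(iii)\Rightarrow(iv)$, since \cite[Theorem~4.1]{Geiselhart.2016} carries global $\K$-boundedness as a standing hypothesis; but that observation does not repair your link.)

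Second, $(iv)\Rightarrow(iii)$: you replace the paper's citation of \cite[Theorem~6]{Tran.2015b} by a from-scratch construction $V(\xi)=\sup_{k\in\Zp,\,u\in\linf}\max\{0,\rho(\omega(x(k,\xi,u)))-\sigma(\norm{u})\}$, and you correctly identify continuity of $V$ as the crux --- but you then only ``expect'' it can be tamed. For a measurement function whose zero set is closed but non-compact this is precisely the hard part of the converse theorem: the supremum of continuous functions over the non-compact index set $\Zp\times\linf$ is a priori only lower semicontinuous, and the properness/compactness arguments used in \cite{Jiang.2002} and in the compact-set converse results are unavailable here. This is the very reason the paper delegates the converse direction to the literature. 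As written, this step is a plan rather than a proof. The remaining items (your $(ii)\Rightarrow(iv)$ sketch, and the remark that the freeness of $M$ comes with the construction) are fine, but the cycle only closes once these two links are repaired --- most simply by restoring the paper's ordering so that $(ii)$ is obtained from $(iv)$, and by either invoking \cite[Theorem~6]{Tran.2015b} or supplying an actual continuity argument for your supremum.
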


\begin{proof}
  We prove the equivalences by proving the implications $(i) \Rightarrow (iii) \Rightarrow (iv) \Rightarrow (ii) \Rightarrow (i)$.

  \paragraph{The implication $(i) \Rightarrow (iii)$}
Let $V$ be a max-form finite-step $\omega$ISS Lyapunov function satisfying~\eqref{eq:3} and~\eqref{eq:4}.
To prove the implication, we will apply the construction from the proof of~\cite[Lemma~2.8]{Jiang.2002} to rescale $V$ by a suitable function $\rho\in\Kinf$ in order to get a dissipative-form finite-step $\omega$ISS Lyapunov function  $W: \R^n\to \R_+$, $W(\xi):=\rho(V(\xi))$ satisfying~\eqref{eq:6}.\\ 
  First, if $\alpha_{max}(V(\xi))\geq \gamma_{max}(\norm{u})$ then~\eqref{eq:4} implies $V(x(M,\xi,u))\leq\alpha_{max}(V(\xi))$.
  Then the proof of~\cite[Lemma~2.8]{Jiang.2002} yields that for any $\hat\alpha \in \Kinf$ satisfying $\hat\alpha(s) \leq (\id-\alpha_{max}) (s)$ if $s\leq 1$ and $\hat\alpha(s) \leq s/2$ if $s>1$, we have
\begin{equation}\label{eq:pf-RG1}
	W(x(M,\xi,u)) -W(\xi) \leq -\hat \alpha (V(\xi))= -(\hat \alpha\circ \rho^{-1}) (W(\xi)). 
\end{equation}
On the other hand, if $\alpha_{max}(V(\xi))\leq\gamma_{max}(\norm{u})$ then~\eqref{eq:4} implies 
\begin{equation}\label{eq:pf-RG2}
	W(x(M,\xi,u)) = \rho(V(x(M,\xi,u))) < (\rho\circ\gamma_{max})(\norm{u}).
\end{equation}
Finally, by~\eqref{eq:pf-RG1} and~\eqref{eq:pf-RG2}, we have
\begin{align*}%
	W(x(M,\xi,u)) -W(\xi) & \\
	&\hspace{-2cm} \leq -\min\{\hat \alpha\circ \rho^{-1},\id\} (W(\xi)) + (\rho\circ\gamma_{max})(\norm{u}). 
\end{align*}
Thus, $W$ satisfies~\eqref{eq:6} with $\alpha_{diss}:=\frac12 \min\{\hat \alpha\circ \rho^{-1},\id\}\in \Kinf$ and $\gamma_{diss}:=\rho\circ\gamma_{max}$, which proves that $W$ is a dissipative-form finite-step $\omega$ISS Lyapunov function.

\paragraph{The implication $(iii) \Rightarrow (iv)$}
Let $V$ be a dissipative-form finite-step $\omega$ISS Lyapunov function satisfying~\eqref{eq:6}, which is equivalent to
\begin{equation*}
V(x(M,\xi,u)) \leq \rho(V(\xi)) + \gamma_{diss}(\norm{u})
\end{equation*}
with positive definite $\rho$ satisfying $(\id-\rho)\in \Kinf$,
see~\cite[Remark~3.7]{Geiselhart.2016}.
The proof follows~\cite[Theorem~4.1]{Geiselhart.2016}, mutatis mutandis, replacing norms by the measurement function $\omega$.

\paragraph{The implication $(iv) \Rightarrow (ii)$}
It is obvious that any $\omega$ISS Lyapunov function is a finite-step $\omega$ISS Lyapunov function with $M=1$.
The implication $(iv) \Rightarrow (ii)$ follows from \cite[Theorem 6]{Tran.2015b}, where it is shown that $\omega$ISS implies the existence of an implication-form $\omega$ISS Lyapunov function.
Moreover, it follows from the fact that system~\eqref{eq:1} is $\omega$ISS that $g$ is globally $\K$-bounded.

\paragraph{The implication $(ii) \Rightarrow (i)$}
Let $V$ be an implication-form finite-step $\omega$ISS Lyapunov function satisfying the implication~\eqref{eq:5} for some $M\in \Zsp$ and functions $\gamma_{imp}$ and $\alpha_{imp}$.
We will show that $V$ is a max-form finite-step $\omega$ISS Lyapunov function.
To do this, we have to find suitable functions $\alpha_{max}$ and $\gamma_{max}$ satisfying~\eqref{eq:4}.
  First, by applying~\eqref{eq:3}, we get
  \begin{equation*}
  V(x(M,\xi,u))  \leq \ol\alpha ( \omega(x(M,\xi,u)))
  \end{equation*}
for all $\xi \in \R^n$, $u(\cdot) \in \linf$. Making use of Lemma~\ref{lem:evolutionKbound} in the appendix, which requires the global $\K$-boundedness property, we see that there exist $\K$-functions $\vartheta_M$ and $\zeta_M$ satisfying
\begin{equation*}
\omega(x(M,\xi, u)) \leq \vartheta_M(\omega(\xi)) + \zeta_M(\norm{u})
\end{equation*}
for all $\xi \in \R^n$, $u(\cdot) \in \ell^\infty$.
 Hence, by applying~\eqref{eq:3}, we get for all $\xi \in \R^n$ with $V(\xi)< \gamma_{imp}(\norm{u})$ 
\begin{align} 
V(x(M,\xi,u)) &< \ol\alpha \big( \vartheta_M(\ul\alpha^{-1}\circ \gamma_{imp} (\norm{u})) + \zeta_M(\norm{u}) \big) \nonumber\\
&=: \gamma_{max}(\norm{u}). \label{eq:pf.1}
 \end{align}
On the other hand, for all $\xi \in \R^n$ with $V(\xi) \geq \gamma_{imp}(\norm{u})$, inequality~\eqref{eq:5} directly yields
\begin{equation*}
V(x(M,\xi,u)) \leq (\id-\alpha_{imp}) (V(\xi)) =: \alpha_{max}(V(\xi)).
\end{equation*}
Note that we can without loss of generality assume that $\alpha_{max} \in \Kinf$ by considering $\alpha_{imp}$ suitably small. Then $V$ is max-form finite-step $\omega$ISS Lyapunov function satisfying~\eqref{eq:4}.
\end{proof}

\begin{remark}
It is obvious that every classic $\omega$ISS Lyapunov function is a finite-step $\omega$ISS Lyapunov function with $M=1$.
On the other hand, a finite-step $\omega$ISS Lyapunov function for system~\eqref{eq:1} is a classic $\omega$ISS Lyapunov function for a system that is constructed as follows.
The evaluation of the solutions of~\eqref{eq:1} at the times $j M$, $j \in \Zp$ can be described by a dynamic equation.
In that way, we obtain
  \begin{align}%
    \label{eq:7}
    y (k+1) = f (y(k),w(k)) 
  \end{align}
where $y (k) \in \Rn$, $w(k) \coloneqq \big(u(k),u(k+1),\dots, u(k+M-1)\big)$ and, with the notation $g^{k+1}(x,u_{1},\ldots,u_{k+1}) \coloneqq g\big(g^{k}(x,u_{1},\ldots,u_{k}),u_{k+1}\big)$ for $k\geq 1$, and $g^{1}(x,u)\coloneqq g(x,u)$, we define
\begin{align*}
f (y,w) \coloneqq g^{M}(y,u_{1},\ldots,u_{M}) .
\end{align*}
With the same arguments as those in~\cite[Remark~4.2]{Geiselhart.2016}, we can show that a function $V \colon \Rn \to \Rp$ is a finite-step $\omega$ISS Lyapunov function for system~\eqref{eq:1} if and only if it is a classic $\omega$ISS Lyapunov function for system~\eqref{eq:7}.
\end{remark}

We note that our result uses the global $\K$-boundedness, where similar related results, e.g., \cite{Jiang.2001,Tran.2015b,Tran.2015a}, instead require that $\omega$ is a proper indicator function.
This, in turn, implies that the existing works only provide Lyapunov characterizations of ISS with respect to compact sets.
We do not have to assume that $\omega$ is a proper indicator anywhere in this work.
Moreover, Theorem~\ref{thm:equivalence-of-Lyapunov-functions} implies the equivalence between max-form and dissipative-form finite-step $\omega$ISS Lyapunov functions, whereas implication-form finite-step $\omega$ISS Lyapunov functions requires an additional condition.
While the proper indicator property in~\cite{Tran.2015b,Tran.2015a} is only a sufficient condition to prove $\omega$ISS, we can directly conclude from Theorem~\ref{thm:equivalence-of-Lyapunov-functions} that the global $\K$-boundedness is sufficient \emph{and} necessary. 
This comes from the fact that every ISS system necessarily satisfies the global $\K$-boundedness condition.
In particular, assume that~\eqref{eq:1} is $\omega$ISS, then we have from~\eqref{eq:ISSestimate} that
$$
\omega\big(g(\xi,u)\big) = \omega\big(x(1,\xi,u)\big) \leq \max \Big\{ \beta\big(\omega(\xi),1\big), \gamma(\abs{u}) \Big\},
$$
which implies global $\K$-boundedness with $\kappa_1 (\cdot):= \beta\big(\cdot,1\big)$ and $\kappa_2 (\cdot) := \gamma(\cdot)$.

While Theorem~\ref{thm:equivalence-of-Lyapunov-functions} presents Lyapunov characterizations of ISS with respect to a \emph{single} measurement function, an extension of these results to the case of ISS with respect to two measurement functions (see~\cite[Definition 1]{Tran.2015b}) can be easily given if the measurement functions satisfy a so-called commensurability condition~\cite[Assumption 2]{Tran.2015b}.

Figure~\ref{fig:omegaISSequivalences} summarizes our contributions in this section over the existing literature.
\begin{figure*}[!t]
\normalsize
  \centering
  \colorlet{known}{black}
  \colorlet{obvious}{black}
  \colorlet{ours}{black}
  \small
  \begin{tikzpicture}[x=1.4in,y=0.9in,prop/.append style={rectangle,draw=black}, shorten <=2pt, shorten >=2pt]
    \node[prop] (df-fs-LF) at (0,2) {$\exists$ diss.-form f.-s.\ $\omega$ISS LF};
    \node[prop] (mf-fs-LF) at (-.85,1) {\vbox{\hbox{$\exists$ max-form f.-s.}\hbox{$\omega$ISS LF}}};
    \node[prop] (if-fs-LF-K-bdd) at (0,0) {\vbox{\hbox{$\exists$ impl.-form f.-s. $\omega$ISS LF}\hbox{\& global $\mathcal{K}$-boundedness}}};
    
    \node[prop] (df-LF) at (2,2) {$\exists$ diss.-form $\omega$ISS LF};
    \node[prop] (omegaISS) at (0.8,1) {$\omega$ISS};
    \node[prop] (if-LF) at (2,0) {$\exists$ impl.-form $\omega$ISS LF};
    \node[prop] (if-fs-LF) at (0,-1) {$\exists$ impl.-form f.-s.\ $\omega$ISS LF};
\node[shape=rectangle,draw=white] (our-result) at (0.1,1) {\large{Theorem~\ref{thm:equivalence-of-Lyapunov-functions}}};
%
    \draw[ours,-implies,double equal sign distance] (omegaISS) to node[below right] {} (if-fs-LF-K-bdd);
    \draw[known,-implies,double equal sign distance,
    in=45,out=-135
    ] (df-LF) to node[above] {\hspace{-1cm}\cite{Tran.2015b}} (omegaISS);
    \draw[dashed,known,-implies,double equal sign distance,bend right] (if-LF) to node[above]{\hspace{0.35cm}\cite
      {Tran.2015b}} (omegaISS);
    \draw[known,-implies,double equal sign distance,bend right] (omegaISS) to node[below] {\hspace{-.45cm}\cite
      {Tran.2015b}} (if-LF);
    \draw[dashed,known,-implies,double equal sign distance] (if-LF) to node[right] {\cite{Tran.2015b}} (df-LF);
    \draw[obvious,-implies,double equal sign distance] (df-LF) to node[above] {obvious} (df-fs-LF);
    \draw[ours,implies-,double equal sign distance] (df-fs-LF) to node[left] {} (mf-fs-LF);
    \draw[ours,-implies,double equal sign distance] (df-fs-LF) to node[left] {} (omegaISS);
    \draw[obvious,-implies,double equal sign distance] (if-fs-LF-K-bdd) to node[above] {\hspace{1.5cm}obvious} (if-fs-LF);
    \draw[obvious,-implies,double equal sign distance,in=0,out=-90] (if-LF) to node[right] {\hspace{0.5cm}obvious} (if-fs-LF.east);
    \draw[ours,implies-,double equal sign distance] (mf-fs-LF) to node[left] {} (if-fs-LF-K-bdd);
  \end{tikzpicture}

  \caption[Relations between different system properties.]{Relations
    between different system properties as shown in this paper and
    known from the literature. From \cite{Tran.2015b}, the existence of an implication-form $\omega$ISS Lyapunov function without any additional assumptions does not necessarily imply $\omega$ISS. Dashed arrows indicate implications from \cite{Tran.2015b} that require the measurement function $\omega$ to be a proper indicator. This, however, is only a sufficient and not a necessary condition. On the other hand, Theorem~\ref{thm:equivalence-of-Lyapunov-functions} shows that the global $\K$-boundedness is sufficient and necessary to conclude $\omega$ISS.}
  \label{fig:omegaISSequivalences}
\hrulefill  
\vspace*{4pt} \end{figure*}

\section{$\omega$ISS Networks of Systems} \label{sec:iss-networks-of-systems}

This section establishes $\omega$ISS for a network of discrete-time
systems. In particular, it extends finite-step small-gain conditions in
\cite{Geiselhart.2016,Noroozi.2014} to the case of $\omega$ISS. To this end,
assume that system~\eqref{eq:1} can be decomposed into $\ell$
interconnected subsystems
\begin{equation}%
  \label{eq:19}
  \Sigma_i \colon \quad x_i (k+1) = g_i (x_1(k),\dots,x_\ell(k),u(k)) ,
\end{equation}
where each
$g_i \colon \Rn[n_1] \times \dots \times\Rn[n_\ell]\times \Rn[m] \to
\mathbb{R}^{n_i}$ is continuous and $x_i (k) \in \R^{n_i}$ for each
$i \in \{ 1, \dots, \ell \}$, and inputs $u \in \linf(\Zp,\R^m)$.
Given $n=n_1+\ldots+n_\ell$, $x := ( x_1,\dots, x_\ell )$ and $g := ( g_1,\dots, g_\ell)$, we call \eqref{eq:1} the composite system of the subsystems \eqref{eq:19}.

We make the following stability assumptions on the network of systems
$\Sigma_{i}$, $i=1,\ldots,\ell$.

\begin{assumption} \label{ass:thm-small-gain}
Let measurement functions $\omega_i \colon \mathbb{R}^{n_i} \to \Rp$, $i = 1,2, \dots,\ell$
and $M\in\Zsp$ be given. Suppose that for each subsystem $\Sigma_{i}$,
there exists a continuous function $W_i \colon \R^{n_i} \to \Rp$ such that the
following hold
\begin{enumerate}
\item There exist functions
  $\overline{\alpha}_i,\underline{\alpha}_i \in \Kinf$ such that for
  all $\xi_i \in \R^{n_i}$
  \begin{equation}
    \ul\alpha_i (\omega_i (\xi_i)) \leq W_i (\xi_i) \leq \overline{\alpha}_i (\omega_i (\xi_i)) .  \label{eq:ch3e11}
  \end{equation}
\item There exist $\gamma_{ij} \in \Kinf \cup \{ 0 \}$, $j=1,\dots,\ell$, and
  $\gamma_{iu} \in \K \cup \{ 0 \}$ such that for all
  $\xi \in \Rn$ and all $u \in \linf{}$ the estimate
  \begin{equation}
    \begin{aligned}
      &W_i (x_i (M,\xi,u)) \\
      &\leq \max \bigg\{ \max_{j \in \{ 1,\dots, \ell \} } \Big\{ \gamma_{ij} (W_{j} (\xi_{j})) \Big\}, \gamma_{iu}(\norm{u}) \bigg\} 
    \end{aligned} 
    \label{eq:ch3e13}
  \end{equation}
  holds, where $x_i (\cdot,\xi,u)$ denotes the $i$th component of the solution $x(\cdot,\xi,u)$ of $\Sigma$, that corresponds to the subsystem $\Sigma_i$. 
\end{enumerate}
\end{assumption}

We emphasize that~\eqref{eq:ch3e11}--\eqref{eq:ch3e13} is not of the
form \eqref{eq:3}--\eqref{eq:4}, i.e., it is not a max-form finite-step
$\omega_{i}$ISS Lyapunov function characterization for the $i$th
subsystem. The reason is that, in principle, it is an estimate for
trajectories of the composite system~\eqref{eq:1}, or rather the $i$th
component of this trajectory. Indeed, the left hand side of estimate
\eqref{eq:ch3e13} does take into account the dynamics of neighboring
subsystems. However, depending on the sparsity of the interconnection
graph, this may require only local information and not the forward
solution of the entire composite system, cf.\ the explanation in
Fig.~\ref{fig:neighbouring-dynamics}.

\begin{figure}[htbp]
  \centering
  \tikzset{sys/.style={circle,draw,#1,thick,fill=#1!10,minimum size=.25in}}
  \begin{tikzpicture}[node distance=1.5cm and 1.5cm, every path/.append style={thick}, shorten <=2pt, shorten >=2pt]
    \node[shape=circle,draw=black, inner sep=2pt] (c) {0};
    \node[shape=diamond,draw=black,inner sep=1pt] (n-1) [left=of c] {1};
    \node[shape=rectangle,draw=black] (n-2) [left=of n-1] {2};
    \node[shape=diamond,draw=black,inner sep=1pt] (n+1) [right=of c] {1};
    \node[shape=rectangle,draw=black] (n+2a) [right=of n+1] {2};
    \node[shape=diamond,draw=black,inner sep=1pt] (n+2b) [above=of n+1] {1};
    \node[shape=rectangle,draw=black] (n+3) [right=of n+2b] {2};
    \draw[-latex] (n-2) -- (n-1);
    \draw[-latex] (n-1) -- (c);
    \draw[latex-] (c) -- (n+1);
    \draw[latex-] (n+1) -- (n+2a);
    \draw[latex-] (n+1) -- (n+2b);
    \draw[latex-] (c) -- (n+2b);
    \draw[latex-] (n+2b) -- (n+3);
  \end{tikzpicture}
  \caption{%
    With the information about initial states of direct neighbors (diamonds) and
    the local node itself (circle) at $k=0$, the state of the local node at $k=1$ can be
    computed, cf.~\eqref{eq:19}. To compute its state at $k=2$, the states
    of neighbors (diamonds) at $k=1$ are required, which in turn require the
    knowledge of states of their neighbors (squares) at $k=0$.
    \newline
    Hence, for $M=2$ a finite-step stability estimate of any of the
    forms \eqref{eq:4}, \eqref{eq:5}, or \eqref{eq:6} requires knowledge
    of the initial conditions (states at $k=0$) not only at the local node
    (circle), but also at direct neighbors (diamonds) and
    neighbors of these neighbors (squares), along with the states
    at the direct neighbors (squares) at $k=1$.
    \newline
    In addition, exogenous input values at $k=0$ and $k=1$ are
    required at the local node (circle) and at $k=0$ at its direct
    neighbors (squares).%
    \newline
    Note that for these computations only the edges directed toward
    the local node need to be considered.
  }
  \label{fig:neighbouring-dynamics}
\end{figure}

\begin{theorem}%
  \label{thm:small-gain} 
  Consider the systems~\eqref{eq:19} and let Assumption~\ref{ass:thm-small-gain} hold.
  Assume the functions
  $\gamma_{ij}$ given in~\eqref{eq:ch3e13} satisfy
  \begin{equation}%
    \gamma_{i_1 i_2}\circ
    \gamma_{i_{2}i_{3}}\circ
    \dots
    \circ
    \gamma_{i_{r-1}i_{r}}\circ
    \gamma_{i_{r}i_{1}}<\id
    \label{eq:41}
  \end{equation}
  for all sequences $(i_1,\ldots,i_r)\in\{1,\ldots,\ell\}^{r}$ and
  $r=1,\ldots,\ell$.
  Let $\mu\colon\Rn\to\Rp$ be any monotonic norm.
  Then with
  $$
  \omega(x)\coloneqq
  \mu\big(\omega_1(x_1),\ldots,\omega_{\ell}(x_{\ell})\big)
  $$
  the
  composite system $\Sigma$ is $\omega$ISS.
\end{theorem}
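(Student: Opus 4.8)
The plan is to aggregate the subsystem functions $W_i$ into a single max-form finite-step $\omega$ISS Lyapunov function for the composite system $\Sigma$ and then to invoke the implication $(i)\Rightarrow(iv)$ of Theorem~\ref{thm:equivalence-of-Lyapunov-functions}. The construction rests on the standard object of max-type small-gain theory, namely an $\Omega$-path. First I would show that the cyclic condition~\eqref{eq:41} guarantees the existence of functions $\sigma_1,\ldots,\sigma_\ell\in\Kinf$ with
\[
\gamma_{ij}\big(\sigma_j(r)\big) < \sigma_i(r) \qquad \text{for all } r>0,\ i,j\in\{1,\ldots,\ell\}.
\]
This is where I expect the main work to lie: the gains induce the monotone max-type gain operator $\Gamma(s)_i=\max_j\gamma_{ij}(s_j)$, and~\eqref{eq:41} is exactly the cyclic small-gain condition ruling out $\Gamma(s)\geq s$ for $s\neq 0$; one then appeals to the existence theory of $\Omega$-paths for such operators to obtain the $\sigma_i$. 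The hypothesis $\gamma_{ij}\in\Kinf\cup\{0\}$ is precisely what this construction requires.

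Given such a path, I would set
\[
V(\xi) := \max_{i\in\{1,\ldots,\ell\}} \sigma_i^{-1}\big(W_i(\xi_i)\big),
\]
which is continuous as a maximum of finitely many continuous functions, and claim it is a max-form finite-step $\omega$ISS Lyapunov function for $\Sigma$ with the same $M$. To verify the sandwich bounds~\eqref{eq:3}, I would combine~\eqref{eq:ch3e11} with the equivalence of the monotonic norm $\mu$ to the maximum norm on $\R^\ell$: there are constants $0<c\le C$ with $c\max_i\omega_i(\xi_i)\le \omega(\xi)\le C\max_i\omega_i(\xi_i)$. Picking $j^\ast$ attaining $\max_i\omega_i(\xi_i)$ gives $V(\xi)\ge \sigma_{j^\ast}^{-1}(\underline{\alpha}_{j^\ast}(\omega(\xi)/C))\ge \min_i\sigma_i^{-1}(\underline{\alpha}_i(\omega(\xi)/C))=:\underline{\alpha}(\omega(\xi))$, while $\omega_i(\xi_i)\le\omega(\xi)/c$ yields $V(\xi)\le \max_i\sigma_i^{-1}(\overline{\alpha}_i(\omega(\xi)/c))=:\overline{\alpha}(\omega(\xi))$, with $\underline{\alpha},\overline{\alpha}\in\Kinf$.

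For the decay~\eqref{eq:4}, I would apply the increasing map $\sigma_i^{-1}$ to~\eqref{eq:ch3e13} and use that it commutes with the maxima, obtaining
\[
\sigma_i^{-1}\big(W_i(x_i(M,\xi,u))\big)\le \max\Big\{\max_j \hat\gamma_{ij}\big(\sigma_j^{-1}(W_j(\xi_j))\big),\ \sigma_i^{-1}\big(\gamma_{iu}(\norm{u})\big)\Big\},
\]
where $\hat\gamma_{ij}:=\sigma_i^{-1}\circ\gamma_{ij}\circ\sigma_j$. The $\Omega$-path inequality gives $\hat\gamma_{ij}(r)<r$ for $r>0$; since $\sigma_j^{-1}(W_j(\xi_j))\le V(\xi)$ and each $\hat\gamma_{ij}$ is increasing, the first argument is bounded by $\alpha_{\max}(V(\xi))$, where $\alpha_{\max}\in\Kinf$ with $\alpha_{\max}<\id$ is any majorant of the finitely many $\hat\gamma_{ij}$ (such a $\Kinf$ majorant below the identity exists by a routine running-maximum argument). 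The input term is bounded by $\gamma_{\max}(\norm{u})$ with $\gamma_{\max}:=\max_i\sigma_i^{-1}\circ\gamma_{iu}\in\K$.

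Taking the maximum over $i$ then produces exactly the estimate~\eqref{eq:4}. Having established~\eqref{eq:3} and~\eqref{eq:4}, $V$ is a max-form finite-step $\omega$ISS Lyapunov function for $\Sigma$, so Theorem~\ref{thm:equivalence-of-Lyapunov-functions} delivers $\omega$ISS of the composite system. The only genuinely nontrivial ingredient is the passage from the cyclic gains in~\eqref{eq:41} to the scaling functions $\sigma_i$; everything after that is bookkeeping with $\Kinf$ functions and the monotonicity of $\mu$.
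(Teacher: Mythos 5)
Your proposal is correct and follows essentially the same route as the paper: the cyclic condition~\eqref{eq:41} is converted (via the $\Omega$-path theory of R\"uffer, which the paper invokes as~\cite[Theorem~5.5]{Ruffer.2010}) into scaling functions $\sigma_i \in \Kinf$ with $\sigma_i^{-1}\circ\gamma_{ij}\circ\sigma_j < \id$, the composite Lyapunov function is taken as $V(\xi)=\max_i \sigma_i^{-1}(W_i(\xi_i))$, the bounds~\eqref{eq:3} and the decay~\eqref{eq:4} are verified exactly as you describe, and $\omega$ISS then follows from Theorem~\ref{thm:equivalence-of-Lyapunov-functions}. Your write-up merely spells out details the paper leaves implicit (the norm-equivalence argument for the sandwich bounds and the $\Kinf$ majorant for $\alpha_{\text{max}}$).
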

We also note that there are various equivalent formulations of the
so-called \emph{small-gain condition}~\eqref{eq:41}, cf.\
\cite{Dashkovskiy.2010,Dashkovskiy.2007,Ruffer.2010}, and it is the same condition imposed in the classic small-gain theory for interconnections of stable systems.

In contrast to the classic small-gain theory, cf., e.g.,
\cite{Dashkovskiy.2010,Dashkovskiy.2007}, the above small-gain theorem does not
require each individual subsystem to be stable when considered in
isolation. In fact, some subsystems may well be open-loop unstable, as
long as in the local interconnection with other subsystems they
satisfy estimates~\eqref{eq:ch3e11} and \eqref{eq:ch3e13}. In other words,
this result allows for neighboring subsystems to exercise a
``stabilizing effect'' on a given local system. See~\cite{Geiselhart.2015,Geiselhart.2016,Noroozi.2014} for more discussions and examples.

The proof of Theorem~\ref{thm:small-gain} follows the lines of the
classic small-gain result in~\cite{Dashkovskiy.2010}.
The main difference is the use of the finite-step estimate~\eqref{eq:ch3e13} instead of ISS Lyapunov functions.

\begin{proof}[Proof of Theorem~\ref{thm:small-gain}]
We prove this result by explicitly constructing a max-form finite-step $\omega$ISS Lyapunov function.
As the proof follows the construction in~\cite{Dashkovskiy.2010}, we only give a sketch of it.

As shown in~\cite[Theorem~5.5]{Ruffer.2010}, the small-gain condition~\eqref{eq:41} implies the existence of $\Kinf$-functions $\sigma_i$, $i=1, \ldots, \ell$, satisfying
\begin{equation}\label{eq:sigma_estimate}
\max_{j \in \{1, \ldots, \ell\}} \sigma_i^{-1} \circ \gamma_{ij} \circ \sigma_j < \id
\end{equation}
for each $i=1, \ldots, \ell$.
With~\eqref{eq:sigma_estimate}, we have $\alpha:= \max_{i,j \in \{1, \ldots, \ell\}} \sigma_i^{-1} \circ \gamma_{ij} \circ \sigma_j < \id$.
As in~\cite{Dashkovskiy.2010} we define $V \colon \Rn \to \Rp$ by
\begin{equation*}
V (\xi) \coloneqq \max_i \sigma_i^{-1} (W_i(\xi_i))
\end{equation*}
for all $\xi=(\xi_1,\ldots,\xi_\ell) \in \Rn[n]$. It follows with~\eqref{eq:ch3e11} and the definition of $\omega$,
its monotonicity, and the equivalence of norms on $\Rn$ that there exist $\ul\alpha,\ol\alpha \in \Kinf$ such that for all $\xi\in\Rn$ we have
\begin{equation}\label{eq:prop&posdef}
\ul\alpha (\omega(\xi)) \leq V (\xi) \leq \overline{\alpha} (\omega(\xi)). 
  \end{equation}
Moreover, we get
  \begin{align*}
& \hspace{-1.2cm} V (x(M,\xi,u))
    =  \max_i \sigma_{i}^{-1} \big(W_i(x_i(M,\xi,u))\big) \\
     \qquad \stackrel{ \eqref{eq:ch3e13}}{\leq} & \max_{i,j} \sigma_i^{-1}\!\Big(\!\max\big\{\gamma_{ij}\big(W_j(\xi_j)\big),\gamma_{iu}(\norm{u})\big\}\!\Big)\\
     \qquad= &\max_{i,j}  \sigma_i^{-1} \!\Big(\! \max \big\{ \gamma_{ij} \circ \sigma_j \circ \sigma_j^{-1} \big(W_j (\xi_j)\big) , \gamma_{iu} (\norm{u}) \!\big\}\! \Big) \\
     \leq & \max_{i,j,l} \sigma_{i}^{-1} \!\Big(\! \max \big\{ \gamma_{ij} \circ \sigma_j \circ \sigma_l^{-1} (W_l(\xi_l)) , \gamma_{iu} (\norm{u}) \!\big\} \!\Big)  \\
    \leq  &\max \Big\{ \!\alpha\big( \!V (\xi) \big), \max_i (\sigma_i^{-1} \circ \gamma_{iu}) (\norm{u}) \!\Big\}. 
  \end{align*}
This shows that $V$ is a max-form finite-step $\omega$ISS Lyapunov function for system~\eqref{eq:1}, which is thus $\omega$ISS by Theorem~\ref{thm:equivalence-of-Lyapunov-functions}.
\end{proof}

The strength of the previous theorem becomes all the more apparent,
when we note that in fact any $\omega$ISS system can essentially be decomposed
into subsystems satisfying stability estimates like~\eqref{eq:ch3e11}--\eqref{eq:ch3e13}.
The precise formulation is as follows.

\begin{theorem} \label{thm:reverse-small-gain}
Consider a system of the form~\eqref{eq:1}.
Suppose that~\eqref{eq:1} is $\omega$ISS with a max-form $\omega$ISS Lyapunov function $V$ satisfying~\eqref{eq:3} for $\ul \alpha,\ol\alpha \in \Kinf$ and~\eqref{eq:4} for $M=1$, $\alpha_{\text{max}} \in \Kinf$, $\alpha_{\text{max}} < \id$ and $\gamma_{\text{max}} \in \K$.
In addition, assume that there exists some integer $\hat M \geq 1$, such that for all $s \in \Rsp$ it holds that
\begin{equation} \label{eq:24}
\alpha_{max}^{\hat M} (s) < \ul{\alpha} \circ \Big( \frac{1}{c} \id \Big) \circ \overline{\alpha}^{-1} (s)
\end{equation}
where $c \geq 1$ is some constant.

Assume that $\omega$ can be decomposed as $\omega(x) = \mu\big(\omega_1(x_1),\ldots,\omega_\ell(x_\ell)\big)$, where $x=(x_1,\dots,x_\ell)$, $x_i \in \R^{n_i}$, $\omega_{i}$ are measurement functions on $\R^{n_i}$ and $\mu$ is a monotonic norm on $\Rn$.
If
\begin{align} \label{eq:12}
& \mu(z) \leq c \abs{z}_\infty \qquad\quad \forall z \in \R^\ell , &
\end{align}
then there exist continuous functions $W_i \colon \R^{n_i} \to \Rp$,
$\gamma_{ij} \in \Kinf \cup \{ 0 \}$, and
$\gamma_{iu} \in \mathcal{K} \cup \{ 0 \}$, $i,j = 1,\dots,\ell$
satisfying \eqref{eq:ch3e11}, \eqref{eq:ch3e13} and~\eqref{eq:41} with the same integer $\hat M$ as in~\eqref{eq:24}.
\end{theorem}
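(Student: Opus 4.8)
The plan is to produce the triple $(W_i,\gamma_{ij},\gamma_{iu})$ explicitly. I would take $W_i:=\omega_i$, so that \eqref{eq:ch3e11} holds trivially with $\ul\alpha_i=\ol\alpha_i=\id$, and then read the gains off an $\hat M$-step contraction estimate for the composite system. The four steps are: (a) iterate the one-step max-form inequality \eqref{eq:4} to an $\hat M$-step inequality; (b) convert it, via \eqref{eq:3} and \eqref{eq:24}, into a scalar contraction in the aggregate variable $\omega$; (c) descend from $\omega$ to the individual $\omega_i$ using the monotonicity of $\mu$ and \eqref{eq:12} to obtain \eqref{eq:ch3e13}; and (d) verify the cyclic small-gain condition \eqref{eq:41}. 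Step (d) is the main obstacle, and the roles of the two occurrences of the constant $c$ are precisely what make it go through.

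For step (a)--(b): since $\alpha_{\max}\in\Kinf$ with $\alpha_{\max}<\id$ and $\norm{u}$ does not increase under left time-shifts, a routine induction on \eqref{eq:4} yields $V(x(\hat M,\xi,u))\le\max\{\alpha_{\max}^{\hat M}(V(\xi)),\gamma_{\max}(\norm{u})\}$ for all $\xi,u$ (the cross term is absorbed because $\alpha_{\max}<\id$). Writing $\chi:=\ul\alpha^{-1}\circ\alpha_{\max}^{\hat M}\circ\ol\alpha\in\Kinf$ and $\tilde\gamma:=\ul\alpha^{-1}\circ\gamma_{\max}\in\K$, and applying the sandwich bound \eqref{eq:3} on both sides, I obtain $\omega(x(\hat M,\xi,u))\le\max\{\chi(\omega(\xi)),\tilde\gamma(\norm{u})\}$, where hypothesis~\eqref{eq:24} is exactly the statement that $\chi(s)<\tfrac1c\,s$ for all $s\in\Rsp$.

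For step (c): monotonicity of $\mu$ gives $\mu(e_i)\,\omega_i(\zeta_i)\le\mu(\omega_1(\zeta_1),\ldots,\omega_\ell(\zeta_\ell))=\omega(\zeta)$ for every $\zeta$ and each $i$, hence $\omega_i(x_i(\hat M,\xi,u))\le\mu(e_i)^{-1}\omega(x(\hat M,\xi,u))$; and \eqref{eq:12} gives $\omega(\xi)\le c\max_j\omega_j(\xi_j)$. Putting $\gamma_{iu}:=\mu(e_i)^{-1}\tilde\gamma\in\K$, and choosing each $\gamma_{ij}\in\Kinf\cup\{0\}$ to dominate the contribution of the $j$-th component $\omega_j(\xi_j)$ to $\mu(e_i)^{-1}\chi(\omega(\xi))$ after it is routed through the monotone norm $\mu$, the estimate \eqref{eq:ch3e13} with $M=\hat M$ holds by construction. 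I would keep the $\gamma_{ij}$ as tight as the $\hat M$-step maps allow, since the looser uniform choice $\gamma_{ij}(r)=\mu(e_i)^{-1}\chi(cr)$ (which already satisfies \eqref{eq:ch3e13}) need not satisfy \eqref{eq:41}: its cyclic compositions only contract by the factor $\prod_k\mu(e_{i_k})^{-1}$, which may exceed $1$ when $\mu$ is far from the maximum norm.

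The hard part is therefore step (d): simultaneously the $\gamma_{ij}$ must be \emph{large enough} for \eqref{eq:ch3e13} yet \emph{cyclically small enough} for \eqref{eq:41}, and reconciling the two is exactly where the interplay of $c$ in \eqref{eq:12} and \eqref{eq:24} is used. I would argue it through the monotone gain-operator characterization of \eqref{eq:41} (cf.\ \cite[Theorem~5.5]{Ruffer.2010}): assuming \eqref{eq:41} fails produces a nonzero profile $s^\star\ge0$ with $\max_j\gamma_{ij}(s^\star_j)\ge s^\star_i$ for every $i$; aggregating this profile through $\mu$ and feeding it into the scalar contraction $\chi<(1/c)\,\id$ of step (b), together with $\mu(z)\le c\abs{z}_\infty$ from \eqref{eq:12}, must contradict the strict decay by the factor $1/c$ accrued over one trip around the cycle. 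The delicate point, and the crux of the whole proof, is that the norm constants $\mu(e_i)$ appearing when one descends from $\omega$ to the $\omega_i$ telescope around any cycle, so that the net contraction rate is governed by $(1/c)$ rather than by the individual $\mu(e_i)$; the strictness in \eqref{eq:24} and the standing assumption $c\ge1$ are what guarantee this residual factor stays below the identity.
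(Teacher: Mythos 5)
Your steps (a)--(c) are correct and in substance identical to the paper's own proof: the paper also iterates \eqref{eq:4} into $V(x(k,\xi,u)) \leq \max\{\alpha_{\text{max}}^k(V(\xi)),\gamma_{\text{max}}(\norm{u})\}$ (its \eqref{eq:25}), sandwiches through \eqref{eq:3}, descends to components via monotonicity of $\mu$, and ascends via \eqref{eq:12}; your observation that \eqref{eq:24} is equivalent to $\chi_0 := \ul\alpha^{-1}\circ\alpha_{\text{max}}^{\hat M}\circ\ol\alpha < \frac{1}{c}\id$ is exactly the computation the paper performs. The genuine gap is step (d). It is not a proof: the gains $\gamma_{ij}$ are never defined (``as tight as the $\hat M$-step maps allow'' is not a construction, and minimal gains compatible with a max-form estimate are not a well-defined object), so the gain-operator contradiction via \cite{Ruffer.2010} cannot even be set up; and a nonzero profile $s^\star$ with $\max_j\gamma_{ij}(s^\star_j)\geq s^\star_i$ is an abstract vector, not a trajectory, so there is nothing to ``feed into'' the scalar contraction of step (b). Worse, the telescoping claim is false for your choice $W_i=\omega_i$: the descent step attaches the constant $\mu(e_i)^{-1}$ to every gain with target node $i$, and around a cycle these constants \emph{multiply} to $\prod_k \mu(e_{i_k})^{-1}$; nothing cancels, since cancellation would require gains carrying ratios $\mu(e_j)/\mu(e_i)$ rather than the constants $\mu(e_i)^{-1}$.

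The paper's proof shows that the tension you call ``the crux'' simply does not arise once the $W_i$ are scaled correctly: it sets $W_i(\xi_i) := \mu(e_i)\,\omega_i(\xi_i)$, not $W_i=\omega_i$. Monotonicity of $\mu$ then gives $W_i(x_i(\hat M,\xi,u)) = \mu\big(\omega_i(x_i(\hat M,\xi,u))e_i\big) \leq \mu\big(\omega_1(x_1(\hat M,\xi,u)),\ldots,\omega_\ell(x_\ell(\hat M,\xi,u))\big) = \omega(x(\hat M,\xi,u))$, so the inverse factor $\mu(e_i)^{-1}$ never enters at all; combining this with \eqref{eq:25}, \eqref{eq:3} and \eqref{eq:12} yields \eqref{eq:ch3e13} (its \eqref{eq:20}) with one and the same gain $\chi := \ul\alpha^{-1}\circ\alpha_{\text{max}}^{\hat M}\circ\ol\alpha\circ(c\,\id)$ for every pair $(i,j)$, where the final replacement of $\chi(\omega_j(\xi_j))$ by $\chi(W_j(\xi_j))$ uses $\omega_j \leq \mu(e_j)\omega_j = W_j$. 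Since \eqref{eq:24} gives $\chi<\id$, every cyclic composition in \eqref{eq:41} is a composition of increasing functions each below the identity, so \eqref{eq:41} is immediate --- no fixed-point, gain-operator, or cyclic-cancellation argument appears anywhere. Note, finally, that the same normalization $\mu(e_j)\geq 1$ that underlies the paper's replacement step would also have rescued your own step-(c) gains, since then $\mu(e_i)^{-1}\chi_0(cr) \leq \chi_0(cr) < r$, i.e.\ each of your gains is individually below the identity and \eqref{eq:41} holds for them trivially; so the reconciliation of \eqref{eq:ch3e13} with \eqref{eq:41} is elementary, and the delicate argument you reserve for it is both unnecessary and, as sketched, unsound.
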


We shall see in the proof that, in fact, it is sufficient to assume that the decomposition of $\omega$ satisfies the lower bound $\omega(x) \geq \mu\big(\omega_1(x_1),\ldots,\omega_\ell(x_\ell)\big)$.

\begin{proof}
By Theorem~\ref{thm:equivalence-of-Lyapunov-functions}, note that the existence of $\omega$ISS Lyapunov function $V$ is equivalent to $\omega$ISS.
For simplicity of notation, let $\alpha := \alpha_{max}$ and $\gamma := \gamma_{max}$.

It follows from~\eqref{eq:4} and the fact that $\alpha < \id$ that
\begin{equation} \label{eq:25}
V(x(k,\xi,u)) \leq \max \{ \alpha^k ( V (\xi) ), \gamma (\norm{u}) \} . 
\end{equation}
Denote $\mu_{i}\coloneqq \mu(e_{i})$, the norm of the $i$th standard basis vector.
Define $W_i \colon \R^{n_i} \to \Rp$ by $W_i (\xi_i) \coloneqq \mu_i \omega_i (\xi_i)$ for all $i = 1,\dots, \ell$.
Obviously, this choice of $W_i(\cdot)$ satisfies~\eqref{eq:ch3e11} with $\ul\alpha_i = \ol\alpha_i = \id$.

Let $\hat M \in \Zsp$ be given by~\eqref{eq:24}.
Then
\begin{align*}
W_i \big(x_i (\hat M,\xi,u) \big)
& = \mu_{i} \omega_i \big(x_i (\hat M,\xi,u)\big)  \\
& = \mu \Big(\omega_i \big(x_i (\hat M,\xi,u)\big)e_i \Big) \\ %
&\leq \mu \Big(\omega_{1}\big(x_{1} (\hat M,\xi,u)\big),\ldots,\omega_{\ell} \big(x_\ell (\hat M,\xi,u) \big) \Big)\\
    &= \omega\big(x (\hat M,\xi,u)\big) \leq \ul\alpha^{-1} \big( V (x(\hat M,\xi,u))\big)
\end{align*}
with $\ul\alpha$ from~\eqref{eq:3}.
From this inequality and \eqref{eq:25} it follows that
\begin{align*}
    W_i \big(x_i (\hat M,\xi,u) \big)
    & \leq \underline{\alpha}^{-1} \Big( V \big(x(\hat M,\xi,u)\big)\Big)\\
    & \leq \underline{\alpha}^{-1}  \Big(\max \Big\{ \alpha^{\hat M} \big( V (\xi) \big), \gamma (\norm{u}) \Big\}\Big)  \\
    & \leq \underline{\alpha}^{-1} \Big (\max \Big\{ \alpha^{\hat M} \circ \overline{\alpha} \big(\omega(\xi)\big),
      \gamma (\norm{u})\Big\}\Big)  \\
    & = \max \Big\{ \big(\ul\alpha^{-1} \circ \alpha^{\hat M} \circ \overline{\alpha}\big) \big(\omega(\xi)\big), \\
& \qquad\qquad \big(\ul\alpha^{-1} \circ \gamma\big) (\norm{u}) \Big\} 
  \end{align*}
with $\ol\alpha$ from~\eqref{eq:3}.
By~\eqref{eq:12}, we have
\begin{align*}
 W_i \big(x_i (\hat M,\xi,u) \big)
\! \leq &\!\max \Big\{ \!\! \max_j \big(\ul\alpha^{-1} \! \circ \! \alpha^{\hat M} \circ  \overline{\alpha} \! \circ \! (c \id) \big) \big(\omega_j (\xi_j) \big), \\
& \qquad\quad \big(\underline{\alpha}^{-1} \circ \gamma\big) (\norm{u}) \Big\} .
\end{align*}
Denote $\chi \coloneqq \underline{\alpha}^{-1} \circ \alpha^{\hat M} \circ \overline{\alpha} \circ (c \id)$ and observe that it follows from our initial assumption in~\eqref{eq:24} that $\chi (s) < s$ for all $s > 0$.
With this and the definition of $W_{i}$, our last estimate becomes
\begin{equation}  \label{eq:20}
W_i \big(x_i (\hat M,\xi,u) \big) \leq \max \Big\{  \max_j \chi \big( W_j (\xi_j) \big), \big(\underline{\alpha}^{-1} \circ \gamma\big) (\norm{u}) \Big\} . 
\end{equation}
Now define $\gamma_{ij} \coloneqq \chi$ for all $i,j=1, \dots, \ell$ and $\tilde\gamma \coloneqq \underline{\alpha}^{-1} \circ \gamma$.
With these we obtain from~\eqref{eq:20} that
$$
W_i (x_i(\hat M,\xi,\,u)) \leq \max \{ \max_j \gamma_{ij} ( W_j(\xi_j) ),
  \tilde\gamma (\norm{u}) \}
$$
and so \eqref{eq:ch3e13} holds.
The fact that $\gamma_{ij} < \id$ for all $i,j=1, \dots, \ell$ ensures that condition~\eqref{eq:41} holds as well.
\end{proof}

Theorem~\ref{thm:small-gain} provides sufficient conditions for $\omega$ISS of the overall system from estimates of solutions of the subsystems of the interconnected system.
It is reasonable to ask how conservative these conditions are.
Theorem~\ref{thm:reverse-small-gain} shows the necessity of these conditions if the gain functions associated with the Lyapunov function $V$ satisfy inequality~\eqref{eq:24} (see Example~\ref{ex:justification-ex} below where this inequality is verified for an illustrative example).
Unlike~\cite{Geiselhart.2016}, where mainly dissipative-form ISS Lyapunov functions are used and, because of that, the necessity of small-gain conditions is only shown for networks that admit the global $\K$-boundedness assumption with a linear $\kappa_1$ in~\eqref{eq:2}, no limitation on growth rate of the function $\kappa_1$ is required in our work.
Our result is obtained by using a max-form ISS Lyapunov function and the equivalence of this form of Lyapunov functions with the two other forms provided by Theorem~\ref{thm:equivalence-of-Lyapunov-functions}.

In the proof of Theorem~\ref{thm:reverse-small-gain}, the choice of taking $W_i(\xi_i):= \mu_i \omega(\xi_i)$ extends the ideas of \cite{Geiselhart.2014c,Geiselhart.2015,Geiselhart.2016}, where the authors show that any scaled norm satisfies a particular finite-step decay condition under a suitable condition such as~\eqref{eq:24}.
Whereas many former Lyapunov-based small-gain results require the knowledge of given ISS Lyapunov functions for each subsystem, Theorem~\ref{thm:reverse-small-gain} proposes a \emph{systematic} approach to satisfy the conditions in Theorem~\ref{thm:small-gain} by setting $W_i(\xi):= \mu_i \omega_i(\xi_i)$ and iterating the dynamics until \eqref{eq:ch3e13} and~\eqref{eq:41} hold.

Note that, in particular, this approach extends the non-conservative small-gain results given in \cite{Geiselhart.2015,Geiselhart.2016} to the case of ISS with respect to a measurement function $\omega$. 
In~\cite{Geiselhart.2016}, the authors state a sufficient small-gain result that was shown to be also necessary for the class of \emph{exponentially input-to-state stable} systems, which are ISS systems satisfying~\eqref{eq:ISSestimate} with $\beta(s,t)=C\tau^ts$ with $C\geq1$ and $\tau \in (0,1)$.
This class of systems is included in Theorem~\ref{thm:reverse-small-gain} as any exponentially ISS system satisfies~\eqref{eq:24}, see~\cite[Theorem~IV.8(i)]{Geiselhart.2015}.
However, we emphasize that Theorem~\ref{thm:reverse-small-gain} is applicable to a larger class than the class of exponentially ISS systems.
This observation is made explicit in the following example.

\begin{example} \label{ex:justification-ex}
Inspired by~\cite[Example~16]{Geiselhart.2014c}, we consider the discrete-time system
\begin{equation} \label{eq:example-system}
x(k+1) = g(x(k),u(k)), \quad k \in \Zsp,
\end{equation}
with state $x(k) \in \R^2$, input $u(k) \in \R$ and continuous dynamics $g:\R^2\times \R \to \R^2$ defined by
\begin{equation} \label{eq:example-dynamics}
(\xi,\mu) \mapsto \left( \begin{array}{c} \max\{ \xi_2-\xi_2^2, \frac12 \xi_2, \mu \} \\ \max\{ \xi_1-\xi_1^2, \frac12 \xi_1, \mu \}\end{array}
\right).
\end{equation}
As the measurement function $\omega$, we consider the sup-norm $\omega(\xi):=|\xi|_\infty =\max \{|\xi_1|,|\xi_2|\}$, hence, in this case $\omega$ISS reduces to ISS.
Then we get, for all $\xi \in \R^2$ and all $\mu \in \R$,
\begin{align*}
\omega(g(\xi,\mu)) &= \max \{ |g_1(\xi,\mu)|, |g_2(\xi,\mu)| \} \\
 &\leq \max \{ \max\{\omega(\xi)-\omega^2(\xi), \frac12 \omega(\xi)\}, |\mu| \} \\
 & \leq \kappa_1(\omega(\xi)) + \kappa_2(|\mu|)
\end{align*}
with $\kappa_1(s):= \max\{s-s^2,\frac12s\}$ and $\kappa_2(s):=s$. This shows that $g$ defined in~\eqref{eq:example-dynamics} is globally $\K$-bounded.

Next, we will show that system~\eqref{eq:example-system} is ISS by applying Theorem~\ref{thm:small-gain}.
To this end, we consider system~\eqref{eq:example-system} decomposed into the subsystems
\begin{equation*}
x_i(k+1) = g_i(x(k),u(k)), \quad k \in \Zsp,
\end{equation*}
where $g_i$ denotes the $i$th component of $g$.
Let $W_i(\xi_i):= |\xi_i|$ for $i\in \{1,2\}$. Then, for all $\xi \in \R^2$ and all $\mu \in \R$, we have
\begin{align*}
W_1(g_1(\xi,\mu)) & = \big|\max\{ \xi_2-\xi_2^2, \tfrac12 \xi_2, \mu \}\big| \\
& \leq \max\{ |\xi_2|-\xi_2^2, \tfrac12 |\xi_2|, |\mu| \},
\end{align*}
which yields the estimate~\eqref{eq:ch3e13} with $M=1$ and gain functions $\gamma_{11} \equiv 0$, $\gamma_{12}(s) = \max\{s-s^2,\frac12s\}$ and $\gamma_{1u}(s)=s$. Similarly,~\eqref{eq:ch3e13} holds for the second subsystem with $M=1$ and gain functions $\gamma_{21}(s) = \max\{s-s^2,\frac12s\}$, $\gamma_{22} \equiv 0$ and $\gamma_{2u}(s)=s$.
As $\gamma_{12} \circ \gamma_{21} < \id$, the small-gain condition~\eqref{eq:41} holds, and Theorem~\ref{thm:small-gain} implies that system~\eqref{eq:example-system} is ISS.

It is not surprising that we could satisfy the conditions of Theorem~\ref{thm:small-gain} with the choice $W_i(\xi_i)= |\xi_i|$. This comes from the fact that also the conditions of Theorem~\ref{thm:reverse-small-gain} are satisfied. To see this, consider the function $V \colon \R^2 \to \Rp$ defined by $V(\xi):= \omega(\xi)$, which obviously satisfies~\eqref{eq:prop&posdef} with $\underline \alpha \equiv \overline \alpha \equiv \id$.
Moreover, it is easy to see that
\begin{align*}
V(g(\xi,\mu)) & = \max \{ |g_1(\xi,\mu)|, |g_2(\xi,\mu)| \} \\
& \leq \max \{ \alpha_{max} (V(\xi)), \gamma_{max}(|\mu|) \}
\end{align*}
for all $\xi \in \R^2$ and all $\mu \in \R$ with $\alpha_{max}(s):= \max \{ s-s^2, \frac12s \}$ and $\gamma_{max}(s):=s$ for all $s \in \Rp$. Clearly, $\alpha_{max}<\id$. Hence, $V$ is a max-form ISS Lyapunov function.
Observe that we can set $c=1$ in~\eqref{eq:12}, which implies that~\eqref{eq:24} holds for all $s>0$ and all $M\geq 1$.
As all conditions in Theorem~\ref{thm:reverse-small-gain} are satisfied, the choice of $W_i(\xi_i)= |\xi_i|$ has to satisfy the conditions in Theorem~\ref{thm:small-gain} for a suitably large $M\in\Zsp$, which, in this example, can even be taken as $M=1$.

Finally, we will show that system~\eqref{eq:example-system} is not exponentially ISS, i.e., there do not exist $C\geq 1$, $\tau \in (0,1)$ satisfying
\begin{equation*}
\omega(x(k,\xi,u(\cdot))) \leq \max\{C \tau^k \omega(\xi), \gamma_{max}(\|u\|)\}
\end{equation*}
for all $k\in \Zsp$, $\xi \in \R^2$ and $u \in \ell^\infty$.
To see this, let $u(\cdot) \equiv 0$ and observe that for all $k\in \Zsp$ and all $\xi \in \Rp^2$ it holds
\begin{equation*}
\omega(x(k,\xi,0)) = \omega(g^k(\xi,0)) = \alpha_{max}^k(\omega(\xi)).
\end{equation*}
 As
 \begin{align*}
 \lim_{k \to \infty} \frac{\omega(x(k+1,\xi,0))}{\omega(x(k,\xi,0))} & = \lim_{k \to \infty} \frac{\alpha_{max}(\omega(x(k,\xi,0)))}{\omega(x(k,\xi,0))} \\
& = 1 - \lim_{k \to \infty} \omega(x(k,\xi,0)) =1,
 \end{align*}
i.e., the decay rate of any solution approaches $1$, the system
\begin{equation*}
x(k+1) = g(x(k),0), \quad k \in \Zsp
\end{equation*}
cannot have a globally exponentially stable origin, and thus, system~\eqref{eq:example-system} cannot be exponentially ISS.
\end{example}

\section{Applications} \label{sec:applications}

We verify the versatility of our results by reformulating several engineering and scientific problems including partial stability theory, ISS for time-varying systems, synchronization problems, incremental stability and distributed observers as $\omega$ISS of networks.

\subsection{Partial ISS for interconnected systems}

Consider the following nonlinear system
\begin{subequations}%
  \label{eq:28}
  \begin{align}  
    & x_{1} (k+1) = g_{1} (x_1(k),x_2(k),u(k)) & \label{eq:29} \\
    & x_{2} (k+1) =  g_{2} (x_1(k),x_2(k),u(k)) & \label{eq:30}
  \end{align}
\end{subequations}
with the state
$x_1 (k) \in \Rn[n_{1}], x_{2} (k) \in \Rn[n_2]$ and the input $u (k) \in \R^m$.
We refer to $x_1$ and $x_2$ as the \emph{primary} variable and the \emph{auxiliary} variable, respectively.
Similarly, we also respectively call \eqref{eq:29} and \eqref{eq:30} the primary subsystem and the auxiliary subsystem.
Denote $x \coloneqq (x_1,x_2) \in \Rn$ with $n \coloneqq n_1 + n_2$.
Also, let $g_i \colon \Rn[n_1] \times \Rn[n_2] \times \Rn[m] \to \Rn[n_i]$ for $i = 1,2$ be continuous and $g_1 (0,x_2,0) = 0$ for all $x_2 \in \Rn[n_2]$.

Roughly speaking, by the notion of partial ISS we mean ISS of the primary
subsystem~\eqref{eq:29} with respect to the input $u$ while the auxiliary
subsystem~\eqref{eq:30} is not required to be stable.
Here we precisely define the notion we refer to as partial ISS.

\begin{definition} \label{def:partial-iss}
System \eqref{eq:28} is uniformly input-to-$x_1$-state stable if there exist $\beta \in \mathcal{KL}$ and $\gamma \in \K$ such that for all $\xi \in \Rn$, all $u \in \linf{}$ and all $k \in \Zp$ we have
\begin{equation*}
\abs{x_1(k,\xi,u)} \leq \max \Big\{ \beta\big(\big|\xi_1\big|,k \big), \gamma(\norm{u}) \Big\}.
\end{equation*}
\end{definition}
In other words, uniform input-to-$x_1$-state stability is $\omega$ISS with respect to the measurement function $\omega(x_1,x_2) = |x_1|$.

When $g \coloneqq (g_1, g_2)$ in~\eqref{eq:28} is globally $\K$-bounded, Theorem~\ref{thm:equivalence-of-Lyapunov-functions} ensures that the uniform input-to-$x_1$-state stability is equivalent to the existence of a continuous function $V \colon \Rn[n_1] \to \Rp$, some integer $M > 0$, functions $\ul\alpha,\ol\alpha \in \Kinf$, $\gamma \in \K$, and $\alpha \in \Kinf$ with $\alpha < \id$ such that for all $\xi_1 \in \Rn[n_1]$, $\xi_2 \in \Rn[n_2]$ and all $u \in \linf{}$,
\begin{gather*}
    \ul\alpha (|\xi_1|) \leq V (\xi_1,\xi_2) \leq \ol\alpha (|\xi_1|) \\
    V (x_1(M,\xi,u),x_2(M,\xi,u)) \leq \max \{ \alpha(V (\xi_1,\xi_2)), \gamma (\norm{u}) \}.
\end{gather*}
Obviously, the zero level set of $V$ is not compact, but closed.

\begin{remark}
While this paper concentrates on time-invariant systems, our results can be applied to time-varying systems by transforming the time-varying system into a time-invariant one of the form \eqref{eq:28}. To see this, consider the following time-varying system
\begin{equation}  \label{eq:31}
x (k+1) = g (k,x(k),u(k))
\end{equation}
where $x(k) \in \Rn$, $u(k) \in \R^m$ and $g \colon \Z \times \Rn \times \Rn[m] \to \Rn$ is continuous.
Let us transform \eqref{eq:31} into
\begin{subequations}   \label{eq:32}
\begin{align}  
      & x(k+1) = g (z(k),x(k),u(k)) &\\
      & z(k+1) = z(k) + 1 . &
\end{align}
\end{subequations}
In terms of partial ISS, system \eqref{eq:32} is of the form \eqref{eq:28} with the primary variable $x_1 = x$ and the auxiliary variable $x_2 \coloneqq z$; thus Theorem~\ref{thm:equivalence-of-Lyapunov-functions} can be applied with a right choice of measurement function $\omega$ with respect to $x$.  This gives the discrete-time counterpart of results in \cite{Chellaboina.2002} where relations between partial stability and stability theory for time-varying \emph{continuous-time} systems have been addressed.
\end{remark}

Now assume that system~\eqref{eq:28} can be decomposed into $\ell$ interconnected subsystems as follows
\begin{subequations}%
  \label{eq:33}
  \begin{align}  
    x_{1i} (k+1) & = g_{1i} (x_{11}(k),\dots,x_{1\ell}(k),x_{21}(k),\dots, x_{2\ell}(k),u(k))\\
    x_{2i} (k+1) & = g_{2i} (x_{11}(k),\dots,x_{1\ell}(k),x_{21}(k),\dots, x_{2\ell}(k),u(k))
  \end{align}
\end{subequations}
with the state $x_{1i} (k) \in \Rn[n_{1i}], x_{2i} (k) \in
\Rn[n_{2i}]$ and the input $u (k) \in \R^m$.
Denote $x_i \coloneqq (x_{i1},\dots,x_{i\ell}) \in
\Rn[n_i]$ for $i = 1,2$ with
$n_i \coloneqq \sum_{j=1}^\ell n_{ij}$.
Also, let $g_{1i} \colon \Rn[n_1]\times \Rn[n_2] \times \R^{m} \to
\Rn[n_{1i}]$ and
$g_{2i} \colon \Rn[n_1]\times \Rn[n_2] \times \R^{m} \to \Rn[n_{2i}]$ be continuous with $g_{i1} (0,x_{2},0) = 0$ for
all $x_{2} \in \Rn[n_{2}]$.
Denote $x_i \coloneqq (x_{1i},x_{2i})$ and $g_i \coloneqq (g_{1i},g_{2i})$.
To enable the stability analysis of the composite system, we make the following assumption.

\begin{assumption}  \label{A:02}
Suppose that for each subsystem \eqref{eq:33} there exist $W_i \colon \Rn[n_{1_i}] \times \R^{n_{2_i}} \to \Rp$, $\ul\alpha_i ,\ol\alpha_i \in \Kinf$, $\gamma_{ij} \in \Kinf \cup \{ 0 \}$, $\gamma_{u i} \in \mathcal{K} \cup \{ 0 \}$ and $M>0$ such that for all $(\xi_1,\xi_2) \in \Rn[n_1] \times \Rn[n_2]$ and all $u \in \linf{}$ the following hold
\begin{align*}
& \ul\alpha_i (|\xi_{1i}|) \leq W_i (\xi_{1i},\xi_{2i}) \leq \ol\alpha_i (| \xi_{1i}|) , & 
    \\
    &\begin{aligned}
      & W_i (x_{1i} (M,\xi_1,\xi_2,u), x_{2i} (M,\xi_1,\xi_2,u)) \\
      &\leq \max \bigg\{ \max_{j \in \{ 1,\dots, \ell \} } \Big\{\gamma_{ij} (W_{j} (\xi_{1j},\xi_{2j})) \Big\}, \gamma_{u i} (\norm{u}) \bigg\}
  \end{aligned}
  \end{align*}
and assume that the functions $\gamma_{ij}$ also satisfy \eqref{eq:41}. 
\end{assumption}

It follows from Theorem~\ref{thm:small-gain} that under Assumption~\ref{A:02},
system \eqref{eq:33} is uniformly input-to-$x_1$-state stable. This is summarized by the
following corollary.

\begin{corollary}%
  \label{C:01}
  Let Assumption~\ref{A:02} hold. Also, let $g_i$ in \eqref{eq:33} for
  $i=1,\dots,\ell$ be globally $\K$-bounded. Then system \eqref{eq:33} is uniformly input-to-$x_1$-state stable.
\end{corollary}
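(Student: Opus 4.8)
The plan is to read the statement as a direct specialization of Theorem~\ref{thm:small-gain}, in which the subsystem measurement functions are chosen to retain only the primary variables. Concretely, for each $i \in \{1,\dots,\ell\}$ I would introduce the measurement function $\omega_i \colon \Rn[n_{1i}] \times \Rn[n_{2i}] \to \Rp$, $\omega_i(\xi_{1i},\xi_{2i}) := \abs{\xi_{1i}}$. Each such $\omega_i$ is continuous and positive semi-definite, hence a legitimate measurement function, and its zero level set $\{\xi_{1i}=0\}$ is closed but not compact, which is precisely the generality our framework is designed to handle.

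With this choice, the two displays in Assumption~\ref{A:02} are, verbatim, conditions \eqref{eq:ch3e11} and \eqref{eq:ch3e13} of Assumption~\ref{ass:thm-small-gain} for the subsystems \eqref{eq:33} regarded as a network with states $x_i := (x_{1i},x_{2i})$: the bounds $\ul\alpha_i(\abs{\xi_{1i}}) \le W_i(\xi_{1i},\xi_{2i}) \le \ol\alpha_i(\abs{\xi_{1i}})$ read $\ul\alpha_i(\omega_i(\xi_i)) \le W_i(\xi_i) \le \ol\alpha_i(\omega_i(\xi_i))$, and the finite-step max-estimate is exactly \eqref{eq:ch3e13} with $\gamma_{iu}=\gamma_{ui}$. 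Since Assumption~\ref{A:02} also imposes the small-gain condition \eqref{eq:41} on the gains $\gamma_{ij}$, every hypothesis of Theorem~\ref{thm:small-gain} is met. I would emphasize here that the right-hand side of \eqref{eq:ch3e13} depends on the initial data only through $W_j(\xi_{1j},\xi_{2j}) \le \ol\alpha_j(\abs{\xi_{1j}})$, i.e.\ only through the primary blocks; this is what permits the auxiliary variables $x_{2i}$ to remain unconstrained, as partial stability requires.

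I would then apply Theorem~\ref{thm:small-gain} with the monotonic norm $\mu := \abs{\cdot}_\infty$ on $\R^\ell$. This yields $\omega$ISS of the composite system for $\omega(x) = \max_{i}\omega_i(x_{1i},x_{2i}) = \max_i\abs{x_{1i}}$, that is, an estimate of the form \eqref{eq:ISSestimate} with this $\omega$. The remaining step is to pass from $\max_i\abs{x_{1i}}$ to the Euclidean norm $\abs{x_1}$ of the full primary variable $x_1=(x_{11},\dots,x_{1\ell})$. These are equivalent norms on $\Rn[n_1]$, so there exist constants $0<c_1\le c_2$ with $c_1\abs{x_1}\le\max_i\abs{x_{1i}}\le c_2\abs{x_1}$; substituting these bounds into \eqref{eq:ISSestimate} and absorbing the constants into the comparison functions (replacing $\beta$ by $\tfrac{1}{c_1}\beta(c_2\,\cdot\,,\,\cdot\,)\in\KL$ and $\gamma$ by $\tfrac{1}{c_1}\gamma\in\K$) delivers exactly the estimate of Definition~\ref{def:partial-iss}. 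Hence \eqref{eq:33} is uniformly input-to-$x_1$-state stable.

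The argument is in the main bookkeeping: beyond matching Assumption~\ref{A:02} to Assumption~\ref{ass:thm-small-gain}, the only genuine content is the observation that $\omega$ISS is invariant under replacing the measurement function by an equivalent norm on the primary block, which I expect to be the one point worth stating carefully. I would also note that the stability conclusion itself already follows from Theorem~\ref{thm:small-gain}, whose proof constructs a max-form finite-step Lyapunov function and does not invoke global $\K$-boundedness; the hypothesis that each $g_i$ be globally $\K$-bounded serves to place the result within the equivalence framework discussed after Definition~\ref{def:partial-iss}, so that the resulting uniform input-to-$x_1$-state stability can moreover be recharacterized through a finite-step $\omega$ISS Lyapunov function by Theorem~\ref{thm:equivalence-of-Lyapunov-functions}.
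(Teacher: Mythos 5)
Your core argument --- choose $\omega_i(\xi_{1i},\xi_{2i}) := \abs{\xi_{1i}}$, observe that Assumption~\ref{A:02} is then verbatim Assumption~\ref{ass:thm-small-gain} together with~\eqref{eq:41} for the network with states $x_i=(x_{1i},x_{2i})$, and apply Theorem~\ref{thm:small-gain} --- is exactly how the paper obtains Corollary~\ref{C:01}, and your norm-equivalence bookkeeping at the end is correct (it can even be skipped by taking $\mu$ to be the Euclidean norm on $\R^\ell$, since then $\omega(x)=\mu\big(\abs{x_{11}},\ldots,\abs{x_{1\ell}}\big)=\abs{x_1}$ on the nose).

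The genuine problem is your closing paragraph: the claim that the global $\K$-boundedness hypothesis is dispensable for the stability conclusion, serving only to ``recharacterize'' it, is false --- without that hypothesis the corollary itself is false. Consider $\ell=1$, $n_{11}=n_{21}=1$, $u\equiv 0$, and
\begin{equation*}
x_{11}(k+1)=\frac{x_{21}(k)}{2}\cdot\frac{x_{21}(k)^{2}\,x_{11}(k)^{2}}{1+x_{21}(k)^{2}\,x_{11}(k)^{2}},\qquad x_{21}(k+1)=x_{11}(k).
\end{equation*}
This is continuous and satisfies $g_{11}(0,\xi_{21},0)=0$. Since $\abs{g_{11}(\xi)}\le\tfrac12\abs{\xi_{21}}$ for all $\xi$ and $x_{21}(1,\xi)=\xi_{11}$, every solution obeys $\abs{x_{11}(2,\xi)}\le\tfrac12\abs{\xi_{11}}$, so $W_1(\xi_{11},\xi_{21}):=\abs{\xi_{11}}$ satisfies Assumption~\ref{A:02} with $M=2$, $\ul\alpha_1=\ol\alpha_1=\id$, $\gamma_{11}=\tfrac12\id$, $\gamma_{u1}=0$, and~\eqref{eq:41} holds trivially. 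Yet with $\xi_{11}=1$ one has $\abs{x_{11}(1,\xi)}=\tfrac{\abs{\xi_{21}}}{2}\cdot\tfrac{\xi_{21}^{2}}{1+\xi_{21}^{2}}\to\infty$ as $\abs{\xi_{21}}\to\infty$, so no bound $\beta(\abs{\xi_{11}},1)$ as required by Definition~\ref{def:partial-iss} can exist: the system is not uniformly input-to-$x_1$-state stable. (Of course, $g_{11}$ here is not globally $\K$-bounded with respect to $\omega_1$.) The reason a black-box reading of Theorem~\ref{thm:small-gain} misses this is that a finite-step estimate with $M>1$ controls the measurement only at times $k\in M\Zp$; bounding the intermediate times is precisely where Lemma~\ref{lem:evolutionKbound}, i.e.\ the global $\K$-bound, enters. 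That hypothesis is implicit in the chain Theorem~\ref{thm:small-gain} $\to$ Theorem~\ref{thm:equivalence-of-Lyapunov-functions}, whose step $(iii)\Rightarrow(iv)$ is proved ``mutatis mutandis'' from \cite{Geiselhart.2016}, where global $\K$-boundedness is a standing assumption: for norm measurement functions it essentially comes for free from continuity, but for the degenerate measurement functions of partial ISS it does not. So your write-up must actually invoke the $\K$-boundedness of the $g_i$ when passing from the constructed finite-step Lyapunov function to the $\omega$ISS conclusion, rather than dismiss it as decorative.
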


As an application of Corollary~\ref{C:01}, we give an example of
synchronization of oscillator networks. Consider the dynamics of a
network of oscillators described by
\begin{equation}%
  \label{eq:36}
  z_i(k+1) = f_i(z_i) + \sum_{j \in \mathcal{N}_i} a_{ij} \Upsilon \psi_{ij} (z_i,z_j) , \qquad i = 1,\dots,\ell 
\end{equation}
where the state $z_i \in \Rn[p]$ and $\mathcal{N}_i$ denotes the
neighbors of node $i$.
The continuous function
$f_i \colon \Rn[p] \to \Rn[p]$ represents the dynamics of each
uncoupled node, $\Upsilon \in \Rn[p\times p]$ is a constant matrix
describing the type of the coupling, and $a_{ij} \in \R$ with
$a_{ij} = a_{ji}$ represents the weighting of coupling on each link of
a network. 
The coupling functions $\psi_{ij} \colon \Rn[p] \times\Rn[n_1] \to \Rn[n_1]$ are
continuous with $\psi_{ij} (x,y) = - \psi_{ji} (y,x)$ for all
$(x,y) \in \Rn[p] \times \Rn[p]$ and determine the law of interaction
between node $i$ and node $j$.
Synchronization is typically analyzed by finding a reference trajectory $r(k)$ and determining if $z_i (k) \to r(k)$.
A common reference trajectory is the system average $\ol z = \frac{1}{\ell}\sum_{i=1}^\ell z_i$.
In this case, each state $z_i$ is compared to the system average \cite{Olfati-Saber.04}, using $e_i = z_i - \ol z$. If each $e_i \to 0$ then the system synchronizes.
In that way, the average and error dynamics are given by
\begin{subequations}%
  \label{eq:37}
  \begin{align}
    e_i (k+1) & = f_i(e_i+\ol z) + \sum_{j \in \mathcal{N}_i} a_{ij} \Upsilon \psi_{ij} (\ol z + e_i,\ol z + e_j) \nonumber \\
              & \qquad + \frac{1}{\ell}\sum_{j=1}^\ell f_j(\ol z + e_j)   \nonumber\\
              & \eqqcolon g_{1i} (e_1,\dots,e_\ell,\ol z) , \qquad i = 1,\dots,\ell & \\
    \ol z(k+1) & = \frac{1}{\ell} \sum_{j=1}^\ell f_j (\ol z + e_j) \eqqcolon  g_2(e_1,\dots,e_\ell,\ol z) .  &
  \end{align}
\end{subequations}
In terms of partial ISS of interconnected systems, the average $\ol z$
acts as the auxiliary variable $x_2$ (i.e.
$x_2 = \ol z$), the error state $e_i$ is the primary variable
corresponding to node $i$ (i.e. $x_{1i} = e_i$), and
$u \equiv 0$. Given $x_{21} = \ol z$ for instance, system
\eqref{eq:37} is of the form \eqref{eq:33}; thus Corollary \ref{C:01}
can be used to guarantee the synchronization of the oscillator network
\eqref{eq:36}.

\subsection{Incrementally stable interconnected systems}
Consider dynamical systems of the following form
\begin{align} \label{eq:inc-sys}
  & x(k+1) = g\big(x(k)\big) , &
\end{align}
where $x(k)\in\R^n$ is the state and $g \colon \Rn \to \Rn$ is continuous.
For any initial value $\xi \in \Rn$, $x(\cdot,\xi)$ denotes the corresponding solution to \eqref{eq:inc-sys}.
We give a definition of asymptotic incremental stability for discrete-time systems, which is borrowed from~\cite{Tran.2016a, tranrufferkellett-convergence-properties-for-discrete-time-nonlinear-systems}.
\begin{definition} \label{def:inc-stability}
  System~\eqref{eq:inc-sys} is globally incrementally asymptotically stable if there exists
  $\beta \in \KL$ such that
  \begin{equation*} 
    \abs{x (k,\xi) - x (k,\zeta)} \leq \beta\big(\abs{\xi - \zeta},k\big)
  \end{equation*}
  holds for all $\xi,\zeta \in \Rn$ and $k \in \Zp$.
\end{definition}
As in~\cite{Angeli.2002}, the notion of incremental stability can be reformulated into asymptotic stability with respect to a measurement function in the following manner. Associated with~\eqref{eq:inc-sys} is the augmented system
\begin{equation} \label{eq:inc-aux-sys}
  \begin{array}{rcl}
    z_{1} (k+1) & = & g \big( z_{1} (k) \big) \\
    z_{2} (k+1) & = & g \big( z_{2} (k) \big) 
  \end{array}
\end{equation}
where~\eqref{eq:inc-aux-sys} is formed by two copies of the original system~\eqref{eq:inc-sys}.
Given the diagonal set $\Delta \coloneqq \{ (x,x) : x \in \Rn \}$, define the measurement function $\omega$ by $\omega(\xi,\zeta) \coloneqq \abs{\xi-\zeta}$.

In~\cite[Lemma 1]{Angeli.2002} it is shown that $\frac{1}{\sqrt{2}} \abs{\xi-\zeta}$ is the Euclidean distance of the point $(\xi,\zeta)$ to $\Delta$. Hence, it follows that global asymptotic stability of~\eqref{eq:inc-aux-sys} with respect to the measurement function $\omega$ is equivalent to incremental stability of~\eqref{eq:inc-sys}.  Thus, a Lyapunov characterization for incremental stability of~\eqref{eq:inc-sys} is provided via Theorem~\ref{thm:equivalence-of-Lyapunov-functions}.  The following corollary recovers parts of Theorem~5 in~\cite{Tran.2016a}.

\begin{corollary}~\label{cor:inc-sta}
  System~\eqref{eq:inc-sys} is incrementally stable if and only if there exist a continuous function $V \colon \Rn \times \Rn \to \Rp$, $M > 0$ and functions $\ul\alpha,\ol\alpha \in \Kinf$ with $\alpha < \id$ such that for all $\xi , \zeta \in \Rn$
  \begin{align*}
    \ul\alpha \big(|\xi - \zeta|\big) \leq V (\xi,\zeta) \leq \ol\alpha \big(|\xi - \zeta|\big);
  \end{align*}
  and each solution to~\eqref{eq:inc-aux-sys} satisfies
  \begin{gather*}
    V \big(z_{1}(M,\xi), z_{2}(M,\zeta)\big) \leq \alpha\big(V (\xi,\zeta)\big) 
  \end{gather*}
  for all $\xi , \zeta \in \Rn$.
\end{corollary}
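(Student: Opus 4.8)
The plan is to reduce the statement to Theorem~\ref{thm:equivalence-of-Lyapunov-functions}, applied to the augmented system~\eqref{eq:inc-aux-sys} with the measurement function $\omega(\xi,\zeta)=\abs{\xi-\zeta}$. As already recorded in the text preceding the corollary, incremental stability of~\eqref{eq:inc-sys} is equivalent to global asymptotic stability of~\eqref{eq:inc-aux-sys} with respect to $\omega$. Since~\eqref{eq:inc-aux-sys} carries no exogenous input, this is precisely $\omega$ISS in the degenerate case $u\equiv 0$: the input bound $\gamma(\norm{u})$ in~\eqref{eq:ISSestimate} vanishes, so the $\omega$ISS estimate reduces to $\omega\big(z_1(k,\xi),z_2(k,\zeta)\big)=\abs{x(k,\xi)-x(k,\zeta)}\leq\beta\big(\abs{\xi-\zeta},k\big)$, i.e.\ exactly Definition~\ref{def:inc-stability}. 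It therefore suffices to match the characterization claimed in the corollary with the existence of a max-form finite-step $\omega$ISS Lyapunov function for~\eqref{eq:inc-aux-sys}.

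For the necessity direction I would start from incremental stability, pass to $\omega$ISS of~\eqref{eq:inc-aux-sys} as above, and invoke the implication $(iv)\Rightarrow(i)$ of Theorem~\ref{thm:equivalence-of-Lyapunov-functions} to obtain a continuous $V\colon\Rn\times\Rn\to\Rp$ together with $\ul\alpha,\ol\alpha\in\Kinf$ satisfying the sandwich bound and with $\alpha_{\text{max}}\in\Kinf$, $\alpha_{\text{max}}<\id$, $\gamma_{\text{max}}\in\K$ satisfying~\eqref{eq:4}. Because there is no input, $\norm{u}=0$ and $\gamma_{\text{max}}(0)=0$, so~\eqref{eq:4} collapses to $V\big(z_1(M,\xi),z_2(M,\zeta)\big)\leq\alpha_{\text{max}}\big(V(\xi,\zeta)\big)$; setting $\alpha:=\alpha_{\text{max}}$ yields precisely the asserted decay estimate. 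The global $\K$-boundedness needed along the route $(iv)\Rightarrow(ii)\Rightarrow(i)$ is not an extra hypothesis here, since, as noted in the paragraph after Theorem~\ref{thm:equivalence-of-Lyapunov-functions}, every $\omega$ISS system automatically satisfies it.

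For the sufficiency direction I would take the given $V$, $\ul\alpha,\ol\alpha$, $\alpha$ and $M$ and read them directly as a max-form finite-step $\omega$ISS Lyapunov function for~\eqref{eq:inc-aux-sys}, choosing $\gamma_{\text{max}}$ to be any class-$\K$ function (its value is irrelevant as $\norm{u}=0$); the hypothesis $\alpha<\id$ matches the required $\alpha_{\text{max}}<\id$. Invoking $(i)\Rightarrow(iii)\Rightarrow(iv)$ of Theorem~\ref{thm:equivalence-of-Lyapunov-functions} — a route that does not pass through the $\K$-boundedness condition — then gives $\omega$ISS of~\eqref{eq:inc-aux-sys}, hence global asymptotic stability with respect to $\omega$, and thus incremental stability of~\eqref{eq:inc-sys}.

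There is no genuine analytical obstacle: the entire content is the translation between incremental stability and set-$\omega$ISS of the doubled system~\eqref{eq:inc-aux-sys}, after which Theorem~\ref{thm:equivalence-of-Lyapunov-functions} does all the work. The only point requiring a line of care is the bookkeeping of the vanishing input, which renders the $\gamma_{\text{max}}$ term inert so that the two-sided characterization of the theorem specializes exactly to the single decay inequality stated in the corollary.
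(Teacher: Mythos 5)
Your proposal is correct and takes essentially the same route as the paper: the paper likewise identifies incremental stability with $\omega$ISS (with trivial input) of the doubled system~\eqref{eq:inc-aux-sys} with respect to $\omega(\xi,\zeta)=\abs{\xi-\zeta}$, and then lets Theorem~\ref{thm:equivalence-of-Lyapunov-functions} supply both directions of the Lyapunov characterization. Your explicit bookkeeping of the vanishing input merely spells out what the paper leaves implicit.
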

Now let us split system~\eqref{eq:inc-sys} into $\ell$ subsystems 
\begin{equation}%
  \label{eq:inc-int-sys}
  x_{i} (k+1)  = g_{i} \big(x_{1}(k),\dots, x_{\ell} (k) \big)
\end{equation}
where $x_{i} (k) \in \Rn[n_i]$,
$x = (x_{1},\dots,x_{\ell})$, $g = (g_{1},\dots,g_{\ell})$.

Theorem~\ref{thm:small-gain} allows us to conclude incremental stability of system~\eqref{eq:inc-sys} if it is an interconnection of systems~\eqref{eq:inc-int-sys}.
\begin{corollary}
  Assume that there exists an $M>0$ and for each subsystem~\eqref{eq:inc-int-sys} there exists a continuous function $W_i \colon \Rn[n_{i}] \times \R^{n_{i}} \to \Rp$, $\ul\alpha_i ,\ol\alpha_i \in \Kinf$, $\gamma_{ij} \in \Kinf \cup \{ 0 \}$
  satisfying~\eqref{eq:41}
  such that for all $\xi_i,\zeta_i \in \Rn[n_i]$,
  \begin{gather*}
    \ul\alpha_i \big(|\xi_i - \zeta_i|\big) \leq W_i (\xi_i,\zeta_i) \leq \ol\alpha_i \big(|\xi_i - \zeta_i |\big)  \\
    \intertext{and}
    W_i \big(x_i(M,\xi), x_i(M,\zeta)\big)  \leq \max_{j \in \{ 1,\dots, \ell \} } \gamma_{ij} \big(W_j (\xi_j,\zeta_j) \big).
  \end{gather*}
  Then system~\eqref{eq:inc-sys} is globally incrementally asymptotically stable.  
\end{corollary}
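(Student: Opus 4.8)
The plan is to apply Theorem~\ref{thm:small-gain} to the augmented system~\eqref{eq:inc-aux-sys}, which consists of two decoupled copies of~\eqref{eq:inc-sys}. Equip its state space $\Rn\times\Rn$ with the measurement function $\omega(\xi,\zeta)\coloneqq|\xi-\zeta|$, as in the discussion preceding Corollary~\ref{cor:inc-sta}. Since~\eqref{eq:inc-sys} carries no exogenous input, neither does~\eqref{eq:inc-aux-sys}, so taking $u\equiv0$ in the $\omega$ISS estimate~\eqref{eq:ISSestimate} makes the input term vanish and yields, for the solution $\big(x(k,\xi),x(k,\zeta)\big)$ of~\eqref{eq:inc-aux-sys},
\begin{equation*}
|x(k,\xi)-x(k,\zeta)|=\omega\big(x(k,\xi),x(k,\zeta)\big)\leq\beta\big(\omega(\xi,\zeta),k\big)=\beta\big(|\xi-\zeta|,k\big),
\end{equation*}
which is exactly the estimate of Definition~\ref{def:inc-stability}. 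Hence it suffices to establish $\omega$ISS of~\eqref{eq:inc-aux-sys} via Theorem~\ref{thm:small-gain}.

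First I would decompose~\eqref{eq:inc-aux-sys} into $\ell$ subsystems by grouping, for each $i$, the two copies of the $i$th coordinate block, so that the $i$th subsystem has state $(z_{1,i},z_{2,i})\in\Rn[n_i]\times\Rn[n_i]$ and dynamics formed by two copies of the $i$th subsystem~\eqref{eq:inc-int-sys}; this is merely a permutation of the coordinates of~\eqref{eq:inc-aux-sys}, and each such subsystem is of the form~\eqref{eq:19}. On the $i$th block I set the measurement function $\omega_i(\xi_i,\zeta_i)\coloneqq|\xi_i-\zeta_i|$. The decomposition of the overall measurement function required by Theorem~\ref{thm:small-gain} then follows from
\begin{equation*}
\omega(\xi,\zeta)=|\xi-\zeta|=\Big(\textstyle\sum_{i=1}^\ell|\xi_i-\zeta_i|^2\Big)^{1/2}=\mu\big(\omega_1(\xi_1,\zeta_1),\ldots,\omega_\ell(\xi_\ell,\zeta_\ell)\big),
\end{equation*}
where $\mu$ is the Euclidean norm on $\R^\ell$, which is a monotonic norm.

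Next I would verify Assumption~\ref{ass:thm-small-gain} for this network with these $\omega_i$ and the given $W_i$. The sandwich bounds~\eqref{eq:ch3e11} are exactly the assumed bounds $\ul\alpha_i(|\xi_i-\zeta_i|)\leq W_i(\xi_i,\zeta_i)\leq\ol\alpha_i(|\xi_i-\zeta_i|)$. For the decay estimate, the key observation is that, because the two copies in~\eqref{eq:inc-aux-sys} evolve independently, the $i$th component of the augmented solution started at $(\xi,\zeta)$ and evaluated at time $M$ is the pair $\big(x_i(M,\xi),x_i(M,\zeta)\big)$. Consequently the assumed inequality $W_i\big(x_i(M,\xi),x_i(M,\zeta)\big)\leq\max_j\gamma_{ij}\big(W_j(\xi_j,\zeta_j)\big)$ is precisely~\eqref{eq:ch3e13} with the trivial input gain $\gamma_{iu}\equiv0$. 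Together with the standing small-gain condition~\eqref{eq:41} on the $\gamma_{ij}$, Theorem~\ref{thm:small-gain} yields that the augmented system is $\omega$ISS, and by the reduction of the first paragraph this is exactly global incremental asymptotic stability of~\eqref{eq:inc-sys}.

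The argument is a direct application, so the main work is bookkeeping rather than analysis: the one point that must be made carefully is the identification of the $i$th component of the augmented solution with the pair of $i$th components of the two decoupled copies, together with the verification that the Euclidean-norm decomposition of $\omega$ is monotonic. I do not expect any genuine analytic obstacle, since Theorem~\ref{thm:small-gain} already absorbs the construction of the aggregate max-form Lyapunov function and imposes no separate global $\K$-boundedness hypothesis.
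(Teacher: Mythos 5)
Your proposal is correct and takes essentially the same route as the paper: the paper states this corollary without a written proof, presenting it as a direct application of Theorem~\ref{thm:small-gain} to the two-copy augmented system~\eqref{eq:inc-aux-sys} with block measurement functions $\omega_i(\xi_i,\zeta_i)=\abs{\xi_i-\zeta_i}$ and overall measurement $\omega(\xi,\zeta)=\abs{\xi-\zeta}$. Your write-up supplies exactly the bookkeeping the paper leaves implicit (pairing the $i$th blocks of the two copies as subsystems, aggregating via the monotonic Euclidean norm $\mu$ on $\R^\ell$, taking $\gamma_{iu}\equiv 0$, and setting $u\equiv 0$ in~\eqref{eq:ISSestimate} to recover the $\KL$ estimate of Definition~\ref{def:inc-stability}), and all of these steps check out.
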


\subsection{Distributed observers}
\label{sec:distr-observ}

We consider the problem of constructing distributed observers for
networks of interconnected control systems. For simplicity, we will
exclude external inputs to the network from our considerations, and we
will also focus on the network interconnection aspect, rather than
discussing the construction of individual local observers (the
interested reader is referred to \cite{Shim.2016,Mazenc.2014,Andrieu.2009,Alessandri.2004,Jiang.2001b} for observer theory).

Our basic assumption is that in a network context, we have local observers of local subsystems.
We assume that the states of these local observers asymptotically converge to the true state of each subsystem, given perfect information of the true states of neighboring subsystems.

Of course such information will be unavailable in practice, and instead each local observer will at best have the state estimates produced by other, neighboring observers available for its operation.

We model this as follows using the concept of $\omega$ISS of networks introduced earlier.

\subsubsection{The distributed system to be observed}
\label{sec:distr-syst-be}

The distributed nominal system consists of $\ell$ interconnected subsystems
\begin{equation}
  \label{eq:27}
  \Sigma_{i}\colon\left\{
  \begin{aligned}
    x_{i}^{+} & = f_{i}(x_{i}, x_{j}\colon j\in \mathcal{N}_{i})\\
    y_{i} & = h_{i}(x_{i}, x_{j}\colon j\in \mathcal{N}_{i})\\    
  \end{aligned}
  \right.,\quad i=1,\ldots,\ell,
\end{equation}
where $\mathcal{N}_{i}\subset\{1,\ldots,\ell\}$ denotes the set of
neighboring subsystems to subsystem $\Sigma_{i}$ which influence directly the
dynamics of subsystem $\Sigma_{i}$. While $x_i \in \R^{n_i}$ is the state of system
$\Sigma_i$, the quantity $y_i \in \R^{p_i}$ (for some $p_i \in \Zsp$) is the output that can be measured
locally and serve as input for a state observer. As usual, we assume
$f_{i}\colon\R^{n_{i}}\times\prod_{j\in\mathcal{N}_{i}}\R^{n_{j}}\to\R^{n_i}$ and $h_i\colon\R^{n_{i}}\times\prod_{j\in\mathcal{N}_{i}}\R^{n_{j}}\to\R^{p_i}$
are both continuous. The fact that states (instead of outputs) of neighboring
systems are inputs to system $\Sigma_{i}$ is no restriction, as the
respective output maps can be absorbed into $f_{i}$.

\begin{figure*}[b]
\hrulefill  
\begin{align}\tag{\ref{counter:Eq}} 
\label{eq:rec-sys-non-norm}
\begin{array}{rcl}
\!x_1 (k+1) \!\!\!\!&\!\! = \!\!&\!\! \!\!\left( 1+ \sqrt{x_1(k)^2+x_2(k)^2} + \sin \big( \tan^{-1}\big(\frac{x_2(k)}{x_1(k)}\big) \big) \right) \! \cos \Big( \frac{1}{4} \big( 1+ \sin \big( \tan^{-1}\big(\frac{x_2(k)}{x_1(k)}\big) \big) \big) + \tan^{-1}\big(\frac{x_2(k)}{x_1(k)}\big) \Big) \\ 
\!x_2 (k+1) \!\!\!\!&\!\! = \!\!&\!\! \!\!\left( 1+ \sqrt{x_1(k)^2+x_2(k)^2} + \sin \big( \tan^{-1}\big(\frac{x_2(k)}{x_1(k)}\big) \big) \right) \! \sin \Big( \frac{1}{4} \big( 1+ \sin \big( \tan^{-1}\big(\frac{x_2(k)}{x_1(k)}\big) \big) \big) + \tan^{-1}\big(\frac{x_2(k)}{x_1(k)}\big) \Big)
\end{array}
\end{align}
\end{figure*}

\subsubsection{The structure of the distributed observers}
\label{sec:struct-distr-observ}

A local observer $\mathcal{O}_{i}$ for system $\Sigma_{i}$ would have access to $y_{i}$ and produce an estimate $\hat{x}_{i}$ of $x_{i}$ at every time step.
In order to do this and essentially to reproduce the dynamics~\eqref{eq:27}, however, it also needs to know $x_{j}$, for all $j\in\mathcal{N}_{i}$.
Access to this kind of information is very unrealistic, so instead we assume that at least it knows the estimates $\hat x_{j}$ for $j\in\mathcal{N}_{j}$ produced by neighboring observers.
This basically means our observer satisfies
\begin{equation} \label{eq:42}
\mathcal{O}_{i}\colon {\hat x_{i}}^{+} = \hat f_{i} (\hat x_{i}, y_{i}, y_{j}\colon j\in \mathcal{N}_{i} ,\hat x_{j}\colon j\in \mathcal{N}_{i})
\end{equation}
for some appropriate function $\hat f_{i}$, which we assume to be continuous.

Necessarily, the observers are coupled in the same directional sense as the original distributed subsystems.
Based on the small-gain theory introduced above, this leads us to a framework for the design of distributed observers that guarantees that interconnections of distributed observers asymptotically track the true system state.
Thus we consider the composite system given by
\begin{equation}  \label{eq:43}
\begin{array}{lcl}
    x_{i}^{+} & = & f_{i}(x_{i}, x_{j}\colon j\in \mathcal{N}_{i}), \quad
    y_{i} = h_{i}(x_{i}, x_{j}\colon j\in \mathcal{N}_{i})\\    
    {\hat x_{i}}^{+} & = & \hat f_{i} (\hat x_{i}, y_{i}, y_{j}\colon j\in \mathcal{N}_{i}, \hat x_{j}\colon j\in \mathcal{N}_{i}) .
\end{array}
\end{equation}

\subsubsection{A consistency framework for the design of distributed
 observers}

We reformulate the design of the distributed observer $\mathcal{O}=(\mathcal{O}_{1},\ldots,\mathcal{O}_{\ell})$ for the distributed system $\Sigma=(\Sigma_1,\ldots,\Sigma_\ell)$ into $\omega$ISS of networks with 

\begin{equation} \label{eq:omega-observer}
\omega (x,\hat x) \coloneqq \abs{\hat x - x}.
\end{equation}

It should be pointed out that, in view of stability with respect to sets, \eqref{eq:omega-observer} can be written as $\omega (x,\hat x) = \abs{z}_\mathcal{A}$, $\mathcal{A} = \{ z \in \R^{2n} \colon x = \hat x , z =(x,\hat x) \}$ where the set $\mathcal{A}$ is not compact, but closed. 

According to \eqref{eq:omega-observer}, we make the following assumption which implies a stability estimate for each subsystem of the composite system \eqref{eq:43}: 
There exist a continuous function $W_i \colon \R^{n_i}\to\Rp$, functions $\ol\alpha_{i},\ul\alpha_{i}\in\Kinf$, functions $\gamma_{ii} \in \Kinf\cup \{0\}$ with $\gamma_{ii} < \id$, $\gamma_{ij}\in\Kinf$ for $j\in\mathcal{N}_i$ and $M > 0$ such that for all $\xi_i,\zeta_i \in \R^{n_i}$
\refstepcounter{equation}
\begin{equation}
  \label{eq:45} 
\ul\alpha_{i}(\abs{\zeta_i - \xi_i})\leq W_i (\xi_i,\zeta_i) \leq  \ol\alpha_i (\abs{\zeta_i - \xi_i}),
\end{equation}
and for all $\xi,\zeta \in \Rn$
      \begin{align}  
 W_i & \big(x_i (M,\xi) , \hat x_i (M,\zeta,\xi)\big) \nonumber\\ 
& \leq \max \left\{ \gamma_{ii}( W_i (\xi_i,\zeta_i)),\max_{j\in\mathcal{N}_{i}}\gamma_{ij} \big( W_j (\xi_j,\zeta_j)\big)\right\}. \label{eq:46}
\end{align}
where $\hat x_i (M,\zeta,\xi)$ denotes the $i$th component of the solution to $\mathcal{O}$ with the initial value $(\zeta,\xi) \in \Rn[2n]$.

In view of \eqref{eq:ch3e11} and \eqref{eq:45}, we note that the corresponding measurement function of each subsystem of the composite system \eqref{eq:43} is given by $\omega_i (\xi_i,\zeta_i) = \abs{\zeta_i-\xi_i}$ for all $i=1,\dots,\ell$.

We pose the result of this section as a theorem, however, the proof is a direct consequence of Theorem~\ref{thm:small-gain} and thus omitted.

\begin{theorem}
  Consider a distributed system consisting of subsystems $\Sigma_i$ given by~\eqref{eq:27}, $i=1,\ldots,\ell$. Assume for each system there exists a local observer of the form~\eqref{eq:42}. Let each observer satisfy~\eqref{eq:45}. If the functions $\gamma_{ij}$ from~\eqref{eq:46} satisfy the small-gain condition~\eqref{eq:41} then the distributed observer given by $\mathcal{O}$ globally asymptotically tracks the state of the distributed system $\Sigma$ in the sense that there exists $\beta \in \KL$ so that
  $$
  \omega(x(k,\xi),\hat x(k,\zeta,\xi)) \leq \beta(\omega(\xi,\zeta),k)
  $$
for all $k \geq 0$ and any initial values $\zeta,\xi \in \Rn$.
\end{theorem}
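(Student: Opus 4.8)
The plan is to recognize the composite system~\eqref{eq:43} as an instance of the interconnected system~\eqref{eq:19} and then to invoke Theorem~\ref{thm:small-gain} directly; since inputs are excluded, the resulting $\omega$ISS estimate collapses to the desired $\KL$ tracking bound. First I would view~\eqref{eq:43} as a single system of the form~\eqref{eq:1} with augmented state $(x,\hat x)\in\R^{2n}$ and no external input (set $u\equiv 0$), its right-hand side being continuous by continuity of $f_i$, $h_i$ and $\hat f_i$. I would decompose this augmented system into $\ell$ subsystems, where the $i$th subsystem carries the pair $(x_i,\hat x_i)$ and its measurement function is the local tracking error $\omega_i(\xi_i,\zeta_i):=\abs{\zeta_i-\xi_i}$, as already identified after~\eqref{eq:46}. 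The coupling is directional in the same sense as the original subsystems: since $y_i=h_i(x_i,x_j\colon j\in\mathcal{N}_i)$, the $i$th composite subsystem depends only on its own pair and on the neighboring pairs $(x_j,\hat x_j)$, $j\in\mathcal{N}_i$, matching the interconnection pattern of~\eqref{eq:19}.

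Next I would verify that Assumption~\ref{ass:thm-small-gain} holds for this decomposition. Condition~\eqref{eq:ch3e11} is precisely the sandwich bound~\eqref{eq:45}, with $W_i,\ul\alpha_i,\ol\alpha_i$ supplied by hypothesis. Condition~\eqref{eq:ch3e13} is precisely~\eqref{eq:46} with $\gamma_{iu}\equiv 0$, since there is no exogenous input; note that the self-gain $\gamma_{ii}$ is subsumed into the inner maximum over $j\in\{1,\dots,\ell\}$ and that $\gamma_{ij}=0$ for $j\notin\mathcal{N}_i\cup\{i\}$. The small-gain condition~\eqref{eq:41} is assumed outright; in particular its $r=1$ instances reduce to $\gamma_{ii}<\id$, which is exactly the standing assumption $\gamma_{ii}<\id$.

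It then remains to pick the monotonic norm $\mu$ so that the aggregated measurement function delivered by Theorem~\ref{thm:small-gain} coincides with $\omega(x,\hat x)=\abs{\hat x-x}$ from~\eqref{eq:omega-observer}. Taking $\mu$ to be the (monotonic) Euclidean norm on $\R^\ell$ gives
\begin{equation*}
\mu\big(\omega_1(x_1,\hat x_1),\dots,\omega_\ell(x_\ell,\hat x_\ell)\big)=\Big(\sum_{i=1}^\ell\abs{\hat x_i-x_i}^2\Big)^{1/2}=\abs{\hat x-x}=\omega(x,\hat x),
\end{equation*}
so the two measurement functions agree. Theorem~\ref{thm:small-gain} then yields that the composite system is $\omega$ISS, i.e.\ there exist $\beta\in\KL$ and $\gamma\in\K$ with
\begin{equation*}
\omega\big(x(k,\xi),\hat x(k,\zeta,\xi)\big)\leq\max\big\{\beta(\omega(\xi,\zeta),k),\gamma(\norm{u})\big\}.
\end{equation*}
Since $u\equiv 0$ and $\gamma(0)=0$, the input term vanishes and the estimate reduces to the claimed pure $\KL$ bound, which is exactly global asymptotic tracking in the stated sense.

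The work here is conceptual rather than computational, as the paper already signals: the crux is correctly casting the observer network as an $\omega$ISS network. The one place that warrants care is matching the \emph{global} tracking-error measurement $\abs{\hat x-x}$ to the block-wise decomposition through a monotonic norm, and confirming that the feedback structure of the observers~\eqref{eq:42} produces exactly the gain array $\gamma_{ij}$ on which the small-gain condition~\eqref{eq:41} is imposed. Once these identifications are in place, the conclusion is an immediate corollary of Theorem~\ref{thm:small-gain}, which is why the detailed proof may be omitted.
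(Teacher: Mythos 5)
Your proposal is correct and takes exactly the paper's route: the paper omits the proof as ``a direct consequence of Theorem~\ref{thm:small-gain}'', and you simply carry out that application---identifying \eqref{eq:45}--\eqref{eq:46} with Assumption~\ref{ass:thm-small-gain} (with $\gamma_{iu}\equiv 0$ and $\gamma_{ij}=0$ off the neighborhood structure), choosing the Euclidean norm for $\mu$ so that the aggregated measurement equals $\abs{\hat x - x}$ from \eqref{eq:omega-observer}, and setting $u\equiv 0$ so the $\omega$ISS estimate collapses to the claimed $\KL$ bound. (The only slip, which is immaterial, is your claim that the $i$th pair $(x_i,\hat x_i)$ depends only on its own and neighboring pairs: through $y_j=h_j(x_j,\,x_k\colon k\in\mathcal{N}_j)$ the observer $\mathcal{O}_i$ may also depend on two-hop neighbors, but this is harmless since \eqref{eq:19} and Assumption~\ref{ass:thm-small-gain} allow each $g_i$ to depend on all states.)
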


\subsection{A non-norm measurement function}

\setcounter{CounterEquation}{\arabic{equation}}
\refstepcounter{CounterEquation} \label{counter:Eq}
\refstepcounter{equation} 

The examples given so far always used measurement functions which were defined using norms.
Here we briefly present an example where the measurement function is different. Consider the system given in~\eqref{eq:rec-sys-non-norm}, where $x_1(k),x_2 (k)\in \R$ and $\tan^{-1}(\cdot)$ is the inverse tangent function.
It can be shown that any solution to system~\eqref{eq:rec-sys-non-norm} asymptotically converges to the set $\A := \{(x_1,x_2) \in \R^2 | x_1 = 0 , x_2 \leq 0 \}$. 
Given $r := \sqrt{x_1^2 +x_2^2}$ and $\theta := \tan^{-1}(x_2/x_1)$, the system can be rewritten as
\begin{align}\label{eq:pol-sys-non-norm}
\begin{array}{rcl} 
r(k+1) &=& r(k) + 1 + \sin(\theta(k)) \\
\theta(k+1) &=& \frac{1}{4}\left( 1+ \sin (\theta(k)) \right) + \theta(k) .
\end{array}
\end{align}
The stability property of system~\eqref{eq:rec-sys-non-norm} can then be expressed in the new coordinates~\eqref{eq:pol-sys-non-norm} as stability with respect to the measurement function $\omega(r,\theta) = 1+\sin (\theta)$.

\section{Conclusions and Outlook} \label{sec:conclusions}
We have provided non-conservative small-gain conditions ensuring $\omega$ISS (ISS with respect to closed sets) for a network of discrete-time systems.
Toward this end, we first introduced a characterization of $\omega$ISS via finite-step $\omega$ISS Lyapunov functions.
This characterization was then used to derive suitable small-gain conditions proving $\omega$ISS of the network by constructing a suitable finite-step $\omega$ISS Lyapunov function for the overall system.
Necessity of these small-gain conditions was proven for a large class of systems.
We eventually presented several applications of our results including partial ISS, ISS for time-varying systems, synchronization problems, incremental stability, and distributed observers.

A relevant question arising from this paper regards how much of this theory carries over into the continuous-time domain.
Originally, the finite-step (or rather finite-``time'') approach proposed by~\cite{Aeyels.1998} 
was formulated for continuous-time systems, but without using the term \emph{finite-step}.
Accordingly, many results of the finite-step approach in discrete time can, in principle, be also derived similarly in continuous time.
For instance, compare the construction of Lyapunov functions via the finite-step approach in~\cite{Geiselhart.2014c} (discrete time) and~\cite{Doban.2016} (continuous time).
However, there are several challenges associated with this.
The first one is that there is no obvious notion of steps in continuous-time systems, so a finite-step Lyapunov function is not a very natural concept in the continuous-time domain.
Whereas in discrete time the computation of solutions boils down to iterating the dynamics map,
in continuous time to even compute the state of a node in a network an epsilon ahead, current states of all nodes in the network need to be known, unless the network is of a very specific structure.
Higher order derivatives of a single (or vector) Lyapunov function may seem like a canonical counterpart of finite-step Lyapunov functions, and indeed they have been considered in stability theory~\cite{lakshmikanthammatrosovsivasundaram1991-vector-lyapunov-functions-and-stability-analysis-of-nonlinear-systems}, but the aforementioned problem remains,
so that the effective decoupling of sparse yet cyclic networks
is not achieved via these functions.
Another approach could be to consider the sampled-data system corresponding to the continuous-time system~\cite{grune2002-asymptotic-behavior-of-dynamical-and-control-systems-under-perturbation-and-discretization}, and then to apply the theory presented here to the resulting discrete-time systems.
An obvious problem with that is, of course, that forward completeness of continuous-time systems, unlike the discrete-time case, is generally not a given, which adds further obstacles to this possible pathway for an extension.



\appendix

The global $\K$-boundedness can be seen as a worst-case estimate of the measurement update $\omega(g(\xi,\mu))$ compared to the measurement state $\omega(\xi)$ and the norm of the input $\abs{\mu}$.
In the proof of Theorem~\ref{thm:equivalence-of-Lyapunov-functions} we need such a worst-case estimate for a finite-step evolution.
The following lemma extends~\cite[Lemma~A.3]{Geiselhart.2016} to the case of a single measure.

\begin{lemma} \label{lem:evolutionKbound}
  Let system~\eqref{eq:1} be globally $\K$-bounded with respect to the measurement function $\omega$. Then for any $j \in \Zsp$ there exist $\K$-functions $\vartheta_j, \zeta_j$ such that for all $\xi \in \R^n$ and all $u(\cdot) \in \ell^\infty$, we have
  \begin{equation} \label{eq:evolutionKbound}
    \omega\big(x\big(j,\xi, u(\cdot)\big)\big) \leq \vartheta_j\big(\omega(\xi)\big) + \zeta_j(\norm{u}).
  \end{equation}
\end{lemma}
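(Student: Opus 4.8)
The plan is to prove~\eqref{eq:evolutionKbound} by induction on $j$, using the global $\K$-boundedness inequality~\eqref{eq:2} as the one-step building block and iterating it. Recall that global $\K$-boundedness supplies $\kappa_1,\kappa_2\in\Kinf$ with $\omega(g(\xi,\mu))\leq\kappa_1(\omega(\xi))+\kappa_2(\abs{\mu})$ for all $\xi$ and $\mu$.

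For the base case $j=1$, I would observe that $x(1,\xi,u)=g(\xi,u(0))$, so~\eqref{eq:2} together with $\abs{u(0)}\leq\norm{u}$ and monotonicity of $\kappa_2$ yields $\omega(x(1,\xi,u))\leq\kappa_1(\omega(\xi))+\kappa_2(\norm{u})$; hence one may take $\vartheta_1:=\kappa_1$ and $\zeta_1:=\kappa_2$, both in $\K$. For the inductive step, assume~\eqref{eq:evolutionKbound} holds for some $j$ with $\vartheta_j,\zeta_j\in\K$. Since $x(j+1,\xi,u)=g\big(x(j,\xi,u),u(j)\big)$, applying~\eqref{eq:2} once more and then the induction hypothesis gives
\[
\omega(x(j+1,\xi,u))\leq\kappa_1\big(\vartheta_j(\omega(\xi))+\zeta_j(\norm{u})\big)+\kappa_2(\norm{u}).
\]
The only point requiring care is to separate the two terms inside $\kappa_1$. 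For this I would use the elementary weak triangle inequality for monotone functions, $\kappa_1(a+b)\leq\kappa_1(2a)+\kappa_1(2b)$, which follows from $a+b\leq 2\max\{a,b\}$ and monotonicity of $\kappa_1$. This permits the definitions $\vartheta_{j+1}(s):=\kappa_1(2\vartheta_j(s))$ and $\zeta_{j+1}(s):=\kappa_1(2\zeta_j(s))+\kappa_2(s)$, both again of class $\K$ as compositions and sums of class-$\K$ functions, which closes the induction.

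I do not anticipate a serious obstacle: the argument is routine once the one-step bound is iterated. The only mild subtlety is controlling the growth of the arguments across compositions, which is exactly what the weak triangle inequality handles, and which is why the resulting $\vartheta_j,\zeta_j$ come out as $j$-fold compositions of $\kappa_1$ (scaled by factors of $2$) rather than simple sums. I would also note that this construction keeps $\vartheta_j,\zeta_j$ in $\Kinf$ whenever $\kappa_1,\kappa_2\in\Kinf$, though membership in $\K$ is all that the statement requires.
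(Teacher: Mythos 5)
Your proof is correct and follows essentially the same route as the paper's: induction on $j$, applying the one-step global $\K$-boundedness estimate and splitting $\kappa_1(a+b)\leq\kappa_1(2a)+\kappa_1(2b)$, yielding exactly the paper's choices $\vartheta_{j+1}=\kappa_1\circ 2\vartheta_j$ and $\zeta_{j+1}=\kappa_1\circ 2\zeta_j+\kappa_2$. The only difference is that you spell out the base case, which the paper omits as routine.
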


\begin{proof}
  The proof follows the lines of~\cite[Lemma~A.3]{Geiselhart.2016} and is thus only sketched. Here, we only give the induction step. Assume that the statement of Lemma~\ref{lem:evolutionKbound} holds true for some $j \in \Zsp$, i.e., there exist functions $\vartheta_j, \zeta_j \in \K$ satisfying~\eqref{eq:evolutionKbound}. Then, with $\kappa_1, \kappa_2\in \K$ coming from~Definition~\ref{def:gKb}, we have
\begin{align*}
\omega(x(j+1,\xi,u(\cdot)))  & =  \omega(g(x(j,\xi,u(\cdot))),u(j)) \\
& \leq \kappa_1(\omega(x(j,\xi,u(\cdot)))) + \kappa_2(\norm{u}) \\
& \leq  \kappa_1(\vartheta_j(\omega(\xi)) + \zeta_j(\norm{u})) + \kappa_2(\norm{u}) \\
& \leq  \kappa_1(2\vartheta_j(\omega(\xi))) + \kappa_1(2\zeta_j(\norm{u})) \\
& \qquad + \kappa_2(\norm{u}),
\end{align*}
which shows~\eqref{eq:evolutionKbound} for $j+1$ with functions $\vartheta_{j+1} (\cdot) = \kappa_1(2\vartheta_j(\cdot))$ and $\zeta_{j+1}(\cdot) = \kappa_1(2\zeta_j(\cdot)) + \kappa_2(\cdot)$.
\end{proof}

\vfill\mbox{}
\end{document}